\newcommand{\hs}{\kern 0.75pt}
\newcommand{\hm}{\kern -0.75pt}
\numberwithin{equation}{section}
\newtheorem{thm}[equation]{Theorem}
\newtheorem{prop}[equation]{Proposition}
\newtheorem{lemma}[equation]{Lemma}
\newtheorem{cor}[equation]{Corollary}
\newtheorem{con}[equation]{Conjecture}
\theoremstyle{definition}
\newtheorem{rem}[equation]{Remark}
\newtheorem{dfn}[equation]{Definition}
\newcommand{\codim}{\operatorname{codim}}
\newcommand{\SB}{\mathop{\mathrm{SB}}}
\newcommand{\Br}{\mathop{\mathrm{Br}}}
\newcommand{\Pic}{\mathop{\mathrm{Pic}}}
\newcommand{\ind}{\mathop{\mathrm{ind}}}
\newcommand{\CH}{\mathop{\mathrm{CH}}\nolimits}
\newcommand{\PGO}{\operatorname{\mathrm{PGO}}}
\newcommand{\PGL}{\operatorname{\mathrm{PGL}}}
\newcommand{\GL}{\operatorname{\mathrm{GL}}}
\newcommand{\Ch}{\mathop{\mathrm{Ch}}\nolimits}
\newcommand{\res}{\mathop{\mathrm{res}}\nolimits}
\newcommand{\Z}{\mathbb{Z}}
\newcommand{\F}{\mathbb{F}}
\newcommand{\C}{\mathbb{C}}
\newcommand{\Aut}{\operatorname{Aut}}
\newcommand{\pt}{\mathbf{pt}}
\newcommand{\Prod}{\operatornamewithlimits{\textstyle\prod}}
\newcommand{\Sum}{\operatornamewithlimits{\textstyle\sum}}
\newcommand{\Oplus}{\operatornamewithlimits{\textstyle\bigoplus}}
\newcommand{\Label}{\label}
\newcommand{\Ker}{\operatorname{Ker}}
\renewcommand{\phi}{\varphi}
\title
{The $J$-invariant over splitting fields of Tits algebras}
\keywords
{Chow motives, $J$-invariant, Algebraic groups, Tits algebras.}
\author{Maksim Zhykhovich}
\address{Zhykhovich: Mathematisches Institut, Ludwig-Maximilians-Universit\"at M\"unchen,
Theresienstr. 39, D-80333  M\"unchen, Germany}
\email{zhykhovich@math.lmu.de}
\begin{document}

\begin{abstract}
We describe the $J$-invariant of a semi-simple algebraic group $G$ over a generic splitting field of a Tits algebra of $G$ in terms of the $J$-invariant over the base field. As a consequence we prove a 10 years old conjecture of  Qu\'eguiner-Mathieu, Semenov and Zainoulline  on the \mbox{$J$-invariant} of groups of type $\mathrm{D}_n$. In the case of type $\mathrm{D}_n$ we also provide explicit formulas for the first component and in some cases for the second component of the $J$-invariant.


\end{abstract}

\maketitle

\section{Introduction}

Chow motives were introduced by Grothendieck, and since then they became a fundamental tool for investigating the structure of algebraic varieties. The study of Chow motives and motivic decompositions has several outstanding applications to other topics. For example, Voevodsky's proof of the Milnor conjecture relied on Rost's computation of the motivic decomposition of a Pfister quadric. In \cite{Ka10} Karpenko established the relation between the motivic decomposition and the canonical dimension of a projective homogeneous variety, which allowed to compute the canonical dimension in many cases.

In \cite{PSZ} Petrov, Semenov, and Zainoulline  investigated the structure of the motives of generically split projective homogeneous varieties and introduced a new invariant of an algebraic group $G$,
called the {\it $J$-invariant}. In the case of quadratic forms the $J$-invariant was introduced previously by Vishik in \cite{Vi05}. 
For a fixed prime number $p$ the $J$-invariant of $G$ modulo $p$ is a discrete invariant consisting of several non-negative integer components  $(j_1, ..., j_r)$ with degrees $1\leq d_1\leq \dots \leq d_r $. The integers $r$ and $d_1, \dots d_r$ depend only on the type of $G$ and are known for all types (see table \cite[\S 4.13]{PSZ}). The $J$-invariant encodes the motivic decomposition of the variety $X$ of Borel subgroups in $G$. More precisely, it turns out that the Chow motive of $X$ with coefficients in $\F_p$ decomposes into a direct sum of Tate twists of an indecomposable motive $\mathcal{R}_p(G)$ and the Poincar\'e polynomial of $\mathcal{R}_p(G)$  over a splitting field of $G$ equals
\begin{equation}
\prod_{i=1}^r\frac{t^{d_ip^{j_i}}-1}{t^{d_i}-1}\in\Z[t].
\end{equation}

The $J$-invariant proved to be an important tool for solving several long-standing problems. For example, it plays an important role in the progress on the Kaplansky problem about possible values of the $u$-invariant, see \cite{Vi07} . Another example is the proof of a conjecture of Serre about groups of type $\mathrm{E}_8$ and its finite subgroups, where the $J$-invariant plays a crucial role (see \cite{GS10} and \cite{Sem}). More recently, Petrov and Semenov generalized the $J$-invariant for groups of inner type to arbitrary oriented cohomology theories in the sense of Levine-Morel \cite{LM} satisfying some axioms (see \cite{PS21}). 


Let $(A,\sigma)$ be a central simple algebra of even degree $2n$, endowed with an involution of orthogonal type and trivial discriminant. Let $G= \PGO^+(A,\sigma)$ be the connected component of the automorphism group of $(A, \sigma)$. The group $G$ is adjoint of type $\mathrm{D}_n$. Denote by $J(G) = (j_1, ..., j_r)$ the $J$-invariant of $G$ modulo $p=2$. It is known that the first component $j_1$ is zero  if the algebra $A$ is split. In particular, $j_1$ becomes zero over the function field $F_A$ of the Severi-Brauer variety of $A$, which is a generic splitting field of $A$.

 In \cite{QSZ} Qu\'eguiner-Mathieu, Semenov and Zainoulline
stated a conjecture that the remaining components do not change after generic splitting of $A$.

\begin{con}[{\cite[Remark 7.3]{QSZ}}]
\Label{conj}
If $J(G)=(j_1, ..., j_r)$, then $j_i=(j_i)_{F_A}$ for $i=2, ... ,r$.
\end{con}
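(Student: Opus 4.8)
The plan is to deduce Conjecture~\ref{conj} from the main result of the paper, which describes the $J$-invariant over a generic splitting field of a Tits algebra. For a group $G=\PGO^+(A,\sigma)$ of type $D_n$ with trivial discriminant, the relevant Tits algebras are $A$ itself (corresponding to the vector representation) together with the two half-spin representations, whose associated algebras are the components $C^+,C^-$ of the even Clifford algebra $C(A,\sigma)$. The key point is that $F_A$, the function field of the Severi--Brauer variety $\mathrm{SB}(A)$, is a generic splitting field for $A$; and since $\sigma$ has trivial discriminant, the Brauer classes of $C^\pm$ satisfy $[C^+]=[C^-]$ and $2[C^+]=0$ in $\Br(F)$, while $[C^+]+[C^-]$ together with $[A]$ generate the subgroup of $\Br(F)$ generated by all the Tits algebras. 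One then has to check that over $F_A$ the classes of the Clifford algebra components are already as split as they are going to get relative to $A$ — i.e. $F_A$ does not see the half-spin Tits algebras any differently than $F$ does in a way that affects the $J$-invariant components $j_2,\dots,j_r$.

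Concretely, I would proceed as follows. First, recall from \cite{PSZ} the indexing of the components of $J(G)$ for type $D_n$: the degrees $d_i$ and the modulus for $j_1$ are governed by the vector representation (hence by $A$), while the remaining components $j_2,\dots,j_r$ are governed by the Clifford/half-spin part. Second, apply the Main Theorem of the present paper with the Tits algebra taken to be $A$: this expresses $J(G)$ over $F_A$ in terms of $J(G)$ over $F$. Since passing to $F_A$ splits $A$, the first component $j_1$ becomes $0$ (as already recalled in the introduction), and the content of the theorem is precisely that the formula for the other components is the identity $j_i\mapsto j_i$. Third, observe that this identity statement is exactly Conjecture~\ref{conj}. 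So the proof is essentially a specialization of the Main Theorem to the case $H^1(F,G)$-datum given by the class of $A$, combined with the known structure of the $J$-invariant in type $D_n$.

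The main obstacle — and the reason the conjecture stood open for ten years — is proving the general Main Theorem, namely that generic splitting of a \emph{single} Tits algebra changes only the components of $J(G)$ ``attached'' to that algebra and leaves the others untouched. The difficulty is that the variety of Borel subgroups sits over the function field of a Severi--Brauer variety $\mathrm{SB}(A)$, and one must control the motivic decomposition of $\mathcal{B}(G)_{F_A}$: a priori, splitting $A$ could produce new rational cycles on $\mathcal{B}(G)$ over $F_A$ coming from the geometry of $\mathrm{SB}(A)$, which would make the $J$-invariant drop in an uncontrolled way. The heart of the argument (which I take as given from the paper's main theorem) is a careful analysis — presumably via the motivic decomposition of $\mathrm{SB}(A)$, Karpenko's incompressibility-type results, and the outer/Tits-algebra structure of the Chow ring of $\mathcal{B}(G)$ — showing that the only new cycles are those forced by the splitting of that one algebra. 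Once that structural statement is in hand, the deduction of Conjecture~\ref{conj} for $D_n$ with trivial discriminant is, as sketched above, a direct application.
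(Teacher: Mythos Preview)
Your proposal has a genuine gap. You assert that, once the Main Theorem is available, ``the content of the theorem is precisely that the formula for the other components is the identity $j_i\mapsto j_i$'' for all $i\ge 2$. But that is not what Theorem~\ref{main} says: part~(1) only guarantees $j_i=j'_i$ for components of degree \emph{strictly greater than}~$1$, and for type $D_n$ the component $j_2$ also has degree $d_2=1$ (one has $d_1=d_2=1$, with $d_i=2i-3$ for $i\ge 2$; see \cite[\S4.13]{PSZ} and \cite[\S3]{QSZ}). So Theorem~\ref{main}(1) covers $j_3,\dots,j_r$, but says nothing directly about $j_2$. Your informal claim that ``$j_1$ is governed by the vector representation while $j_2,\dots,j_r$ are governed by the Clifford/half-spin part'' does not translate into a statement about degrees and does not substitute for this.

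For the two degree-$1$ components, Theorem~\ref{main}(2) gives only the multiset equality
\[
\{j_1,j_2\}\cup\{0\}=\{j'_1,j'_2\}\cup\{j_{G,A}\}.
\]
Since $j'_1=0$, this reduces to $\{j_1,j_2\}=\{j'_2,j_{G,A}\}$ as multisets, which by itself does \emph{not} force $j_2=j'_2$: a priori one could have $j'_2=j_1$ and $j_{G,A}=j_2$. The missing ingredient, supplied in the paper as Proposition~\ref{j_1=jGA}, is the identification $j_1=j_{G,A}$; only then does the multiset equality yield $j_2=j'_2$. The proof of that proposition is not a formality: it passes to the generically split variety $Y$ of isotropic ideals of reduced dimension $n-1$, uses the cellular fibration $X\to Y$ and the Poincar\'e polynomial of $\mathcal{R}_2(G_0)$ in the generic case to show that the correction term $\delta$ in the definition of $j_1$ can be taken to be zero, so that $w^{2^{j_1}}$ itself (with $w=w_++w_-$) is $F$-rational. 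Your sketch omits this step entirely.

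A side remark: your description of the Brauer relations is also off. With trivial discriminant one has $[A]+[C_+]+[C_-]=0$ in $\Br(F)$ when $n$ is even, and $[A]=2[C_+]=2[C_-]$ when $n$ is odd; it is not generally true that $[C_+]=[C_-]$.
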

Note that, in the settings of Conjecture \ref{conj}  the central simple algebra $A$ is a Tits algebra of the algebraic group $\PGO^+(A,\sigma)$.
In the present paper we prove Conjecture \ref{conj} and, moreover, generalize it to the case of an arbitrary semi-simple algebraic group $G$ of inner type.  Let $A$ be a Tits algebra of $G$. The main result of the paper (Theorem \ref{main}) describes the connection between the  $J$-invariant of $G$ over a generic splitting field of $A$ and the $J$-invariant over a base field. In particular, we prove that all components of the $J$-invariant of $G$ of degree greater than $1$ do not change after extending to a generic splitting field $F_A$ of $A$. 
Moreover, the main theorem provides some control on how the components of degree $1$ can change over the field $F_A$. In the case $G=\PGO^+(A,\sigma)$ we improve this control in Proposition \ref{j_1=jGA}, which together with the main theorem allows to prove Conjecture \ref{conj} (see Corollary \ref{corconj}).


The main result of the paper allows to split a Tits algebra of an algebraic group without losing much information on the $J$-invariant of the group (only components of degree one may be affected). This may be a useful tool to compute the $J$-invariant, since the algebraic groups with trivial Tits algebras are considered as ``less complex objects'' comparing to those groups with non-trivial Tits algebras.
For example, in the settings of Conjecture \ref{conj} the group $\PGO^+(A,\sigma)$ over the field $F_A$ becomes isomorphic to $\PGO^+(q_{\sigma})$, where $q_{\sigma}$ is the respective quadratic form adjoint to the split algebra with involution $(A, \sigma)_{F_A}$. Hence, Conjecture \ref{conj} allows to reduce the computation of the $J$-invariant (except the first component) of algebras with orthogonal involution to the case of quadratic forms. Note that recently the similar approach was used to investigate the motivic equivalence of algebras with involutions, see \cite{CQZ}.



Section 5 of the paper is devoted to the computation of the first two components $j_1$ and $j_2$ of the $J$-invariant of the group $\PGO^+(A,\sigma)$ of type $\mathrm{D}_n$, where $(A, \sigma)$ is a central simple algebra with orthogonal involution. This question was already investigated in \cite{QSZ}. Namely, in \cite[Corollary 5.2]{QSZ} the upper bounds for $j_1$ and $j_2$ were provided in terms of $2$-adic valuations $i_A$, $i_+$, $i_-$ of indices of algebras   $A$, $C_+$ and $C_-$ respectively, where $C_+$ and $C_-$ are the components of the Clifford algebra of $(A, \sigma)$.
We improve the upper bound for $j_1$ and then show that it is in fact the exact value of $j_1$ (see Theorem \ref{j1}). More precisely, we obtain the following formula
\begin{equation} 
j_1= \min\{k_1, i_A, \max\{i_+, i_-\}\} \, ,
\end{equation}
\noindent where $k_1$ denotes the $2$-adic valuation of $n$.

Note that the formula for $j_1$ and Conjecture \ref{conj} (Corollary \ref{corconj}) allow to {\it completely} reduce the computation of the $J$-invariant of the group $\PGO^+(A,\sigma)$ to the case of quadratic forms. 
Moreover, in Proposition \ref{j2} we also provide an explicit formula for the second component $j_2$ in some cases. 

Note that recently Henke in his PhD thesis  applied the main result of this paper to investigate  motivic decompositions of projective homogeneous varieties  for groups of type  $\mathrm{E}_7$.

The proofs in this paper rely on the computations of rational cycles, the properties of generically split varieties, the theory of upper motives and the index reduction formula.

\section{Preliminaries. Notation.}
\subsection{Chow motives}

Let $F$ be a field. In the present paper we work in the category of the Grothendieck-Chow motives over $F$ with coefficients in $\F_p$ for a fixed prime number $p$ (see \cite{EKM}).

For a smooth projective variety $X$ over $F$ we denote by $M(X)$ the motive of $X$ in this category. We consider the Chow ring $\CH(X)$ of $X$ modulo rational equivalence and we write $\Ch(X)$ for the Chow ring with coefficients in $\F_p$.

For a motive $M$ over $F$ and a field extension $E/F$ we denote by $M_E$ the extension of scalars. A motive $M$ is called {\it split} (resp. {\it geometrically split}), if it is isomorphic to a finite direct sum of Tate motives (resp. if $M_E$ is split over some field extension $E/F$).

Let $G$ be a semisimple algebraic group of inner type. Let $X$ be a projective homogeneous variety under the action of $G$. Note that the motive of $X$ splits over any field extension, over which the group $G$ splits (in the sense of algebraic groups). By $\overline{X}$ we denote the variety $X_E$ over a splitting field $E$ of the group $G$. The Chow ring $\CH(\overline{X})$ does not depend on the choice of $E$ and, therefore, we do not specify the splitting fields in the formulas below. By $\overline{\CH}(X)$ we denote the image of the restriction homomorphism $\CH(X) \rightarrow \CH(\overline{X})$. We say that a cycle from $\CH(\overline{X})$ is $F$-rational if it belongs to $\overline{\CH}(X)$.

The {\it Poincar\'e polynomial} $P(X,t)$ of $X$ is defined as $\Sum_{i\geq 0} \dim \Ch^i(\overline{X}) t^i$. Similarly, for direct motivic summand $M$ of $X$ we define the Poincar\'e polynomial $P(M,t)$ by replacing $\overline{X}$ by $\overline{M}$ in the formula, where $\overline{M}$ denotes the motive $M$ over a splitting field of $G$.


Recall that the {\it Krull-Schmidt principle} holds for any motivic direct summand $M$ of a projective homogeneous variety $X$. Namely,  $M$ decomposes in a unique way in a finite direct sum of indecomposable motives, see \cite{ChM06}. The {\it upper motive } $U(X)$ of $X$ is defined as an indecomposable summand  of $M(X)$ with the property that the Chow group $\Ch^0(U(X))$ is non-zero. It follows by the Krull-Schmidt principle that the isomorphism class of $U(X)$ is uniquely determined by $X$.

Given two projective homogeneous varieties $X_1$ and $X_2$ (under possibly different algebraic groups) over $F$, the upper motives of the varieties $X_1$ and $X_2$ satisfy the following isomorphism criterion.

\begin{prop}[{\cite[Corollary 2.15]{Ka13}}]
\label{prop-upper}
The upper motives $U(X_1)$ and $U(X_2)$ are isomorphic if and only of the varieties $X_1$ and $X_2$ possesses $0$-cycles of degree $1$ modulo $p$ respectively over $F(X_2)$ and $F(X_1)$.
\end{prop}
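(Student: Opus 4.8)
The plan is to deduce the statement from the uniqueness of the upper motive, the standard dictionary between $0$-cycles and Tate summands, and Rost's nilpotence principle. Two preliminary observations will be used. (a) By the Krull--Schmidt principle together with $\dim\Ch^0(\overline{X})=1$, for any projective homogeneous $X$ the motive $U(X)$ is the \emph{only} indecomposable summand of $M(X)$ whose restriction to a splitting field has $\F_p(0)$ as a direct summand; in particular, since $U(X_1)$ is an upper motive, if $U(X_1)$ is isomorphic to a direct summand of $M(X_2)$, then $U(X_1)\cong U(X_2)$. (b) For a field extension $E/F$ the conditions ``$X_E$ has a $0$-cycle of degree prime to $p$'', ``$\F_p$ is a direct summand of $M(X)_E$'' and ``$\F_p$ is a direct summand of $U(X)_E$'' are equivalent: a $0$-cycle of degree prime to $p$ and the structure morphism split off $\F_p$ from $M(X)_E$, conversely a section of the structure morphism is a $0$-cycle of degree $1$ modulo $p$, and the last equivalence follows from (a).

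Granting (a) and (b), the ``only if'' direction is formal: if $U(X_1)\cong U(X_2)$, then, since $X_1$ acquires a rational point over $F(X_1)$, we get $\F_p\subseteq U(X_1)_{F(X_1)}\cong U(X_2)_{F(X_1)}$, hence $\F_p\subseteq M(X_2)_{F(X_1)}$, which says exactly that $X_2$ has a $0$-cycle of degree prime to $p$ over $F(X_1)$; the condition over $F(X_2)$ follows by symmetry.

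For the ``if'' direction --- which I expect to be the main obstacle --- I would manufacture correspondences from the two $0$-cycles. Choose a closed point $\eta$ of $(X_2)_{F(X_1)}$ of degree prime to $p$ (it exists because the degree map is nonzero modulo $p$) and let $\alpha\in\Ch_{\dim X_1}(X_1\times X_2)$ be the class of the closure of $\eta$; as $\eta$ lies over the generic point of $X_1$, this closure has dimension $\dim X_1$ and its first projection is generically finite of some degree $d_1$ prime to $p$, so, viewing $\alpha$ as a morphism $M(X_1)\to M(X_2)$ (a morphism of motives with $\F_p$-coefficients being precisely a class in $\Ch_{\dim X_1}(X_1\times X_2)$), the component of $\overline\alpha$ from the $\F_p(0)$-summand of $M(\overline{X_1})$ to the $\F_p(0)$-summand of $M(\overline{X_2})$ equals $d_1$. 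Symmetrically one gets $\beta\colon M(X_2)\to M(X_1)$ whose analogous component is some $d_2$ prime to $p$. Since $\Hom(\F_p(i),\F_p(j))=0$ for $i\neq j$, the ring $\End(M(\overline{X_1}))$ is the product of its Tate blocks, so passing to the $\F_p(0)$-block is a ring homomorphism $\mu\colon\End(M(X_1))\to\F_p$, and inserting the decomposition $M(\overline{X_2})=\F_p(0)\oplus(\text{higher Tate twists})$ shows that in $\beta\circ\alpha$ only the $\F_p(0)$-to-$\F_p(0)$ route contributes, so $\mu(\beta\circ\alpha)=d_1 d_2\neq 0$. Conjugating by the upper projector $\pi_1$ of $X_1$, which has $\mu(\pi_1)=1$, the endomorphism $\psi:=\pi_1\circ\beta\circ\alpha\circ\pi_1$ of $U(X_1)$ still satisfies $\mu(\psi)\neq 0$, hence $\overline\psi$ is not nilpotent; by Rost's nilpotence principle for geometrically split motives $\psi$ is not nilpotent, and as $U(X_1)$ is indecomposable its endomorphism ring is a finite local $\F_p$-algebra, so $\psi$ is invertible. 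Then $\alpha\circ\pi_1\colon U(X_1)\to M(X_2)$ has $\psi^{-1}\circ\pi_1\circ\beta$ as a retraction, so $U(X_1)$ is a direct summand of $M(X_2)$, and $U(X_1)\cong U(X_2)$ by (a). The delicate points are the multiplicity computation and its compatibility with the upper projectors (that $\beta\circ\alpha$ genuinely acts invertibly on the bottom Tate motive), after which the nilpotence principle and the locality of $\End(U(X_1))$ finish the proof; one should also check that the closures of closed points of the base-changed varieties have the expected dimension and dominate the correct factor, which is precisely where one uses that the chosen points lie over the generic points of $X_1$, respectively $X_2$.
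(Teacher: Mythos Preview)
The paper does not give its own proof of this proposition; it is quoted verbatim from \cite[Corollary~2.15]{Ka13} and used as a black box. So there is nothing in the present paper to compare your argument against.

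That said, your proof is correct and is essentially the standard one (and indeed the one behind Karpenko's result): construct correspondences from the two $0$-cycles, use the multiplicity homomorphism $\mu$ to see that the composite acts invertibly on the bottom Tate summand, invoke Rost nilpotence plus locality of $\End(U(X_1))$ to promote non-nilpotence to invertibility, and finish with the uniqueness of the upper summand. The only place one might want a word more is the claim that $\End(U(X_1))$ is a finite local $\F_p$-algebra: finiteness comes from the fact that $U(X_1)$ is geometrically split (so $\End(U(X_1))$ injects, modulo nilpotents via Rost nilpotence, into the finite ring $\End(\overline{U(X_1)})$), and locality from indecomposability together with Fitting's lemma. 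With that caveat, the argument is complete.
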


\subsection{Tits algebras and the Picard group} 

Let $G_0$ be a split semisimple algebraic group of inner type of rank $n$ over $F$. We fix a split maximum torus $T$ in $G_0$ and a Borel subgroup $B$ of $G_0$ containing $T$. Let $\Pi =\{\alpha_1, ..., \alpha_n\}$ be a set of simple roots with respect to $B$ and let $\{\omega_1,...,\omega_n\}$ be the respective set of fundamental weights.  Enumaration of roots and weight follows Bourbaki.

Denote by $\Lambda_{\omega}$ the respective weight lattice and by $\Lambda_{\omega}^+$ the cone of dominant weights. There is a natural one-to-one correspondence between the isomorphism classes of the irreducible finite dimensional representations of $G_0$ and the elements of $\Lambda_{\omega}^+$. This correspondence associates with an irreducible representation of $G_0$ its highest weight.

Let now $G$ be an arbitrary (not necessarily split) semisimple algebraic group over $F$ of inner type which is a twisted form of $G_0$. With each element $\omega \in \Lambda_{\omega}^+$ one can associate a unique central simple algebra $A_{\omega}$ such that there exists a group homomorphism $\rho: G \rightarrow \GL_1(A)$ having the property 
that the representaion $\rho \otimes F_{\text{sep}}$ of the split group $G\otimes F_{\text{sep}}$ is  the representation with the highest weight $\omega$. The algebra $A_{\omega}$ is called the {\it Tits algebra} of $G$ corresponding to $\omega$. In particular, to any fundamental weight $\omega_i$ corresponds a Tits algebra $A_{\omega_i}$.

Any projective homogeneous $G$-variety $X$ is the variety of parabolic subgroups in $G$ of some fixed type, where the type corresponds to a subset $\Theta$ of the set of simple roots $\Pi$.
The Picard group $\Pic(\overline{X})$ can be identified with a free $\Z$-module generated by $\omega_i$, $i \in \Pi \backslash \Theta$. Consider the group homomorphism $\alpha_X: \Pic(\overline{X}) \rightarrow \Br(F)$ sending $\omega_i$ to the Brauer-class of the Tits algebra $A_{\omega_i}$ corresponding to the fundamental representation with the highest weight $\omega_i$.

By \cite[\S 2]{MT95} the following sequence of groups is exact
\begin{equation}
\label{Br}
    0 \rightarrow \Pic(X) \xrightarrow{\res} \Pic(\overline{X}) \xrightarrow{\alpha_X} \Br(F) \, ,
\end{equation}
where $\res$ is the scalar extension to a splitting field of $G$.

\noindent This sequence allows to express the group $\Pic(X)$ in terms of the Tits algebras of $G$.
\section{$J$-invariant}
\label{J-invariant}
The $J$-invariant of a semisimple algebraic group was introduced in \cite{PSZ} by Petrov, Semenov and Zainoulline.
In this section we briefly recall the definition and the main properties of the $J$-invariant following \cite{PSZ}.

Let $G_0$ be  a split semisimple algebraic group over a field $F$ and $B$ a Borel subgroup of $G_0$. 
An explicit presentation of $\Ch^*(G_0)$ in terms of generators and relations is known for all groups and all primes $p$. Namely, by \cite[Theorem 3]{Kac85}
\begin{equation}
\label{ChG}
\Ch^*(G_0) \simeq \F_p[e_1, ...,e_r]/(e^{p^{k_1}}_1, \dots , e^{p^{k_r}}_r )
\end{equation}

\noindent for some non-negative integers $r$, $k_i$ and some homogeneous generators $e_1, ..., e_r$ with degrees  $1\leq d_1\leq \dots \leq d_r $ coprime to $p$. A complete list of numbers $r$, $k_i$ and $d_i$ is provided in \cite[page 21]{PSZ} for any split group $G_0$.

We introduce an order on the set of additive generators of $\Ch^*(\overline{G})$, i.e., on the monomials $e_1^{m_1}\dots e_r^{m_r}$. To simplify the notation, we denote the monomial 
$e_1^{m_1}\dots e_r^{m_r}$ by $e^M$, where $M$ is an $r$-tuple of integers $(m_1, \dots, m_r)$. The codimension (in the Chow ring) of $e^M$ is denoted by $|M|$. Note that $|M|= \sum_{i=1}^r d_im_i$.

Given two $r$-tuples $M= (m_1, \dots, m_r)$ and $N= (n_1, \dots, n_r)$ we say $e^M \leq e^N$ (or equivalently $M \leq N$) if either $|M|<|N|$ or $|M|=|N|$ and $m_i\leq n_i$ for the greatest $i$ such that $m_i \neq n_i$. This gives a well-ordering on the set of all monomials ($r$-tuples).

Let now $G = {}_{\xi}G_0$ be an inner twisted form of $G_0$ given by a cocycle $\xi \in Z^1(F, G_0)$ and let $X={}_{\xi}(G/B)$ be the variety of Borel subgroups in $G$. Since $X$ and $G/B$ are isomorphic over any splitting field of $G$, we identify the Chow groups $\Ch(\overline{X})$ and $\Ch(G_0/B)$.


We consider the following composite map
\begin{equation}
\label{J-sequence}
\Ch^*(X)\xrightarrow{\res} \Ch^*(G_0/B) \xrightarrow{\pi} \Ch^*(G_0),
\end{equation}
where $\pi$ is the surjective pullback of the canonical projection $G_0\to G_0/B$ and $\res$ is the scalar extension to a splitting field of $G$.

\begin{dfn}[{\cite[Definition 4.6]{PSZ}}]
For each $i$, $1 \leq i \leq r$, set $j_i$ to be the smallest non-negative interger such that the image of the composite map $\pi \circ \res$ contains an element $a$ with the greatest monomial $e_i^{p^{j_i}}$ with respect to the order on $\Ch^*(G_0)$ as above, i.e., of the form
$$a= e_i^{p^{j_i}} + \Sum_{e^M < e_i^{p^{j_i}}} c_M e^M \, ,\quad c_M \in \F_p \, .$$
\noindent The $r$-tuple of integers $(j_1, \dots, j_r)$ is called the {\it $J$-invariant of $G$ modulo $p$} and is denoted by $J_p(G)$.
\end{dfn}

\begin{rem}
Note that the $J$-invariant of $G$ up to a permutation of some components may depend on the choice of the cocycle $\xi$ (see \cite[\S 3]{QSZ}). Below by considering the $J$-invariant of a group $G$ we also fix a cocycle $\xi$.

\end{rem}

\begin{rem}
\Label{generic}
According to \cite[Proposition 5.1]{GiZ12}  and \cite[Theorem 6.4]{KM06} the sequence (\ref{J-sequence}) of graded rings is exact in the middle term if and only if the cocycle $\xi$ is generic.
\end{rem}

In \cite{PSZ} the following motivic interpretation of the $J$-invariant was provided.
\begin{prop}[{\cite[Theorem 5.13]{PSZ}}] 
\label{gen-split}
Let $X$ be the variety of Borel subgroups in $G$. Then the Chow motive of $X$ with coefficients in $\F_p$ decomposes into a direct sum 
\begin{equation}
\label{M(X)}
M(X)\simeq \Oplus_{i\in I} \mathcal{R}_p(G)(i)
\end{equation}
\noindent of twisted copies of an indecomposable motive $\mathcal{R}_p(G)$ for some finite multiset $I$ of non-negative integers. Moreover, the Poincar\'e polynomial of $\mathcal{R}_p(G)$ is given by
\begin{equation}
\label{formula}
P(\mathcal{R}_p(G), t) = \Prod_{i=1}^{r} \frac{1-t^{d_ip^{j_i}}}{1-t^{d_i}}\, .
\end{equation}
\end{prop}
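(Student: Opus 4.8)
The plan is to reduce the statement to two ingredients: that $X$ is \emph{generically split}, and an explicit description of the subring $\overline{\Ch}^*(G_0)$ of rational cycles in $\Ch^*(G_0)$. First I would note that $X$ is generically split: over $F(X)$ the variety $X$ acquires a rational point, so $G_{F(X)}$ has a Borel subgroup and is quasi-split, hence split since $G$ is of inner type; thus $X_{F(X)}$ is split and cellular and $\Ch^*(X_{F(X)})\to\Ch^*(\overline X)$ is an isomorphism. Since $X$ is projective homogeneous, Rost nilpotence holds for $M(X)$ and the Krull-Schmidt principle applies; feeding these into the theory of generically split varieties shows that every indecomposable summand of $M(X)$ is a Tate twist of the upper motive $U(X)$. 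Setting $\mathcal{R}_p(G):=U(X)$ yields the decomposition \eqref{M(X)} for a finite multiset $I$, the copy with $i=0$ being $U(X)$ itself (since $\Ch^0(U(X))\neq 0$).

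To compute the Poincar\'e polynomial I would pass to a splitting field and use the surjection $\pi$ of \eqref{J-sequence}. The point is that cycles induced by the $G_0$-action on $G_0/B$ stay rational on the twisted variety $X$, and that (via a section of $\pi$) the graded $\F_p$-module $\Ch^*(G_0/B)$ is free over a subring $C$ with $\Ch^*(G_0)$ as a system of representatives; this lets one transport the decomposition \eqref{M(X)} to the level of $\Ch^*(G_0)$, where the rational cycles are exactly $\overline{\Ch}^*(G_0)$, the twists by degrees coming from $C$ accounting for the remaining part of the multiset $I$. Since $\Ch^*(G_0)$ is a free module over the subring $\overline{\Ch}^*(G_0)$, one deduces $P(\mathcal{R}_p(G),t)=P(\Ch^*(G_0),t)/P(\overline{\Ch}^*(G_0),t)$, and by \eqref{ChG} the numerator equals $\Prod_{i=1}^{r}(1-t^{d_ip^{k_i}})/(1-t^{d_i})$.

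It then remains to compute $P(\overline{\Ch}^*(G_0),t)$. The key structural fact is that $\overline{\Ch}^*(G_0)$ is not merely a graded subring of $\Ch^*(G_0)=\F_p[e_1,\dots,e_r]/(e_1^{p^{k_1}},\dots,e_r^{p^{k_r}})$ but a subcoalgebra for the comultiplication induced by the group law on $G_0$. Combining this with the minimality in the definition of the $j_i$ and an induction over the well-ordering of monomials, one shows that, after a triangular change of generators, $\overline{\Ch}^*(G_0)$ equals $\F_p[\tilde e_1,\dots,\tilde e_r]/(\tilde e_1^{p^{k_1-j_1}},\dots,\tilde e_r^{p^{k_r-j_r}})$ with $\deg\tilde e_i=d_ip^{j_i}$, whence $P(\overline{\Ch}^*(G_0),t)=\Prod_{i=1}^{r}(1-t^{d_ip^{k_i}})/(1-t^{d_ip^{j_i}})$. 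Substituting into the previous identity yields $P(\mathcal{R}_p(G),t)=\Prod_{i=1}^{r}(1-t^{d_ip^{j_i}})/(1-t^{d_i})$, which is \eqref{formula}.

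The main obstacles I would expect are: (i) establishing the subcoalgebra property of $\overline{\Ch}^*(G_0)$ and pinning down its precise form, which requires a careful analysis of the comultiplication on $\Ch^*(G_0)$; and (ii) making rigorous the passage from the module structure of $\Ch^*(G_0/B)$ to the motivic decomposition, i.e.\ showing that the group-theoretic rational cycles on $X$ realize exactly one indecomposable motive with the predicted Poincar\'e polynomial and that the twists recorded by $I$ come out correctly. The ``single indecomposable summand'' assertion of the first step is itself substantial and rests on the full theory of generically split varieties and upper motives.
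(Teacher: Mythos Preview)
The paper does not prove this proposition: it is stated as a citation of \cite[Theorem~5.13]{PSZ} and used as a black box, with no accompanying proof environment. There is therefore no ``paper's own proof'' to compare your proposal against.

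That said, your outline is broadly consonant with the strategy of the original source \cite{PSZ}: one first shows that all indecomposable summands of a generically split variety are shifts of a single motive (this is their Theorem~5.17, alluded to just after the proposition in the present paper), and then identifies the Poincar\'e polynomial via the structure of $\overline{\Ch}^*(G_0)$. A few caveats on your sketch. First, the step where you assert that $\Ch^*(G_0)$ is free as a module over $\overline{\Ch}^*(G_0)$, and that $\overline{\Ch}^*(G_0)$ itself has the truncated-polynomial form you write down, is the heart of the matter and is not a formal consequence of the coalgebra structure alone; in \cite{PSZ} this is handled by an explicit filtration argument on monomials together with the order you recalled, rather than by a general Hopf-theoretic statement. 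Second, your identity $P(\mathcal{R}_p(G),t)=P(\Ch^*(G_0),t)/P(\overline{\Ch}^*(G_0),t)$ is correct in spirit but its justification in \cite{PSZ} goes through constructing an explicit cellular filtration and matching idempotents, not merely through module-freeness; you should be aware that the passage from the ring-theoretic freeness to the motivic decomposition requires Rost nilpotence and a lifting-of-idempotents argument that you have only gestured at. Finally, the indecomposability of $\mathcal{R}_p(G)$ is a separate (and nontrivial) point in \cite{PSZ}, established via the nilpotence theorem and a counting argument; your identification $\mathcal{R}_p(G)=U(X)$ is correct, but the fact that the upper motive has exactly the Poincar\'e polynomial \eqref{formula} rather than a proper factor of it needs the full argument.
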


\begin{rem}
Note that the $J$-invariant allows to compute not only the Poincar\'e polynomial of $\mathcal{R}_p(G)$  by formula \ref{formula} but also all twisting numbers in the motivic decomposition (\ref{M(X)}) of $X$. Namely, we have
$$ \frac{P(X,t)}{P(\mathcal{R}_p(G), t)} = \Sum_{i \geq 0} a_it^i \, ,$$
\noindent where $a_i$ is the number of the copies of $\mathcal{R}_p(G)$ with the twisting number $i$ in the motivic decomposition (\ref{M(X)}). Note that the Poincar\'e polynomial $P(X,t)$ can be explicitly computed by Solomon formula (see \cite[9.4 A]{Car}). 
\end{rem}
 The motivic decomposition from Proposition \ref{gen-split} holds for more general class of varieties. Namely, a projective homogeneous $G$-variety $X$ is called {\it generically split} if the group $G$ splits over the generic point of $X$. In particular, the variety of Borel subgroups in $G$ is generically split. Another examples are Pfister quadrics and Severi-Brauer varieties. By \cite[Theorem 5.17]{PSZ} the Chow motive of any generically split $G$-variety with coefficients in $\F_p$ decomposes into a direct sum of twisted copies of the motive $\mathcal{R}_p(G)$.
\begin{prop}[{\cite[Theorem 5.5]{PS1}}] 
\label{Poincare-rat}
Let $X$ be a generically split $G$-variety and recall that $\overline{\Ch}(X)$ denotes the subring of $F$-rational cycles in $\Ch(X)$. Then 
$$ \frac{P(X,t)}{P(\mathcal{R}_p(G), t)}  = P(\overline{\Ch}(X), t) \, .$$
\noindent 
In particular, the number of copies of the motive $\mathcal{R}_p(G)(i)$ in the complete motivic decomposition of $X$ is equal to $\dim_{\, \F_p} \overline{\Ch}^i(X)$.

\end{prop}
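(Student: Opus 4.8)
The plan is to reduce the statement, via the motivic decomposition of a generically split variety together with a short bookkeeping of Poincaré polynomials, to a single assertion about the indecomposable motive $\mathcal{R}_p(G)$. First I would invoke \cite[Theorem 5.17]{PSZ}, recalled above: $M(X)\simeq\Oplus_i\mathcal{R}_p(G)(n_i)$ for some finite multiset $\{n_i\}$ of non-negative integers. Set $a_k=\#\{i:n_i=k\}$. Poincaré polynomials are additive over finite direct sums and are multiplied by $t^{n}$ under a Tate twist by $n$, so $P(X,t)=\bigl(\Sum_k a_k t^k\bigr)\cdot P(\mathcal{R}_p(G),t)$, whence $\Sum_k a_k t^k = P(X,t)/P(\mathcal{R}_p(G),t)$. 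Comparing coefficients, the displayed identity in the proposition is equivalent to its ``in particular'' clause $\dim_{\F_p}\overline{\Ch}^k(X)=a_k$ for every $k$, so it is enough to prove the latter.

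The decomposition $M(X)\simeq\Oplus_i\mathcal{R}_p(G)(n_i)$ is given by mutually orthogonal idempotents of $\End_F(M(X))$, and the restriction homomorphism $\Ch(-)\to\Ch(\overline{(-)})$ commutes with these idempotents and with Tate twists; hence the assignment $M\mapsto\overline{\Ch}(M)$ (the image of restriction) is additive and twist-equivariant, and
\[\dim_{\F_p}\overline{\Ch}^k(X)=\Sum_i\dim_{\F_p}\overline{\Ch}^{k-n_i}(\mathcal{R}_p(G)).\]
Thus everything reduces to the two facts $(\mathrm{i})$ $\overline{\Ch}^0(\mathcal{R}_p(G))=\F_p$ and $(\mathrm{ii})$ $\overline{\Ch}^j(\mathcal{R}_p(G))=0$ for all $j>0$, since then the sum above collapses to the number of indices $i$ with $n_i=k$, that is, to $a_k$. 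Fact $(\mathrm{i})$ is easy: by formula \ref{formula} the polynomial $P(\mathcal{R}_p(G),t)$ has constant term $1$, so $\Ch^0(\overline{\mathcal{R}_p(G)})$ is one-dimensional; applying the above decomposition to the variety of Borel subgroups of $G$, where $\mathcal{R}_p(G)$ occurs with twist $0$ and --- that variety being geometrically connected --- with multiplicity one, one sees that the restriction of its fundamental class generates $\Ch^0(\overline{\mathcal{R}_p(G)})$, so $\overline{\Ch}^0(\mathcal{R}_p(G))=\F_p$.

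The main obstacle is fact $(\mathrm{ii})$: the upper motive $\mathcal{R}_p(G)$ carries no nonzero $F$-rational cycle in positive codimension, equivalently (together with $(\mathrm{i})$) the group $\overline{\Ch}(\mathcal{R}_p(G))$ of $F$-rational cycles is one-dimensional. I would establish this using the structure theory already developed in \cite{PSZ}: $\mathcal{R}_p(G)$ is indecomposable and geometrically split, so $\End(\mathcal{R}_p(G))$ is a local ring whose maximal ideal is nilpotent by Rost nilpotence, and, combining this with the explicit description of the ring $\overline{\Ch}^*(G_0)$ underlying the definition of the $J$-invariant and with the upper-motives machinery of \cite{Ka13} (Proposition \ref{prop-upper}), one rules out $\F_p$-independent rational cycles on $\mathcal{R}_p(G)$. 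This bounding of $\dim_{\F_p}\overline{\Ch}(\mathcal{R}_p(G))$ is the one place where genuine input about the group $G$ is needed; everything else is formal.
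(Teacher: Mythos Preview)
The paper does not prove this proposition at all: it is quoted verbatim as \cite[Theorem~5.5]{PS1} and used as a black box, so there is no ``paper's own proof'' to compare against. That said, your outline is worth assessing on its own.

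Your reduction is sound. The Poincar\'e-polynomial bookkeeping and the additivity of $M\mapsto\overline{\Ch}(M)$ under motivic direct sums are routine, and they do reduce the proposition to the single claim that $\overline{\Ch}^{>0}(\mathcal{R}_p(G))=0$. This is exactly the substantive content of the result.

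The gap is in your treatment of that claim. What you wrote for fact~(ii) is not an argument but a list of ingredients --- locality of $\End(\mathcal{R}_p(G))$, Rost nilpotence, the presentation of $\overline{\Ch}^*(G_0)$, and Proposition~\ref{prop-upper} --- without saying how they combine. In particular, Proposition~\ref{prop-upper} is an isomorphism criterion for upper motives of two varieties; it is unclear how it bounds $\dim_{\F_p}\overline{\Ch}(\mathcal{R}_p(G))$. Likewise, locality of the endomorphism ring and Rost nilpotence constrain \emph{endomorphisms}, not rational Chow classes; a nonzero rational class in positive codimension does not by itself produce a nontrivial idempotent, so indecomposability alone does not rule it out. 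And the description of $\overline{\Ch}^*(G_0)$ concerns a quotient of $\Ch^*(\overline{X})$, not the summand $\mathcal{R}_p(G)$ directly, so one must explain how information passes back.

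The actual proof in \cite{PS1} (building on the construction in \cite[\S5]{PSZ}) does not proceed abstractly: it uses the explicit filtration on $\Ch^*(\overline{X})$ by the monomial order introduced before the definition of the $J$-invariant, together with the multiplicative structure, to match each rational cycle with the degree-$0$ piece of exactly one twisted copy of $\mathcal{R}_p(G)$ in the decomposition. If you want to complete your argument, this is the mechanism you need to invoke or reproduce; the general upper-motive formalism is not enough by itself.
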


\begin{rem}
The above proposition shows that the subring $\overline{\Ch}(X) \subset \Ch(\overline{X})$ of $F$-rational cycles encodes the complete motivic decomposition of a generically split variety $X$ over $F$. In contrast, in general in order to find the complete motivic decomposition of a projective homogeneous $G$-variety $X$ one usually needs to describe $F$-ration projectors in the Chow group of the product $\overline {X \times X}$.  
\end{rem}

We finish this section with several observations, which will be usefull later in this paper.
\begin{lemma}
\label{ax1}
Let $\mathfrak{X}, \mathcal{Y}$ be two projective homogeneous varieties over a field $F$. Assume that $\mathcal{Y}$  possesses a zero-cycle of degree $1$ over the function field $F(\mathfrak{X})$. Then the cycle $a\in \Ch(\overline{\mathfrak{X}})$ is $F$-rational if and only if the cycle $a\times 1\in \Ch(\overline{\mathfrak{X} \times \mathcal{Y}} )$ is $F$-rational.
\end{lemma}
\begin{proof}
The direct implication is clear. To show the inverse implication we assume that $a\times 1\in \Ch(\overline{\mathfrak{X} \times \mathcal{Y}} )$ is $F$-rational.

Let $\alpha \in \Ch(\mathfrak{X} \times \mathcal{Y})$ be a cycle, such that  $\overline{\alpha} = a\times 1$.  Let $x \in \Ch_0(\mathcal{Y}_{F(\mathfrak{X})})$ be a zero-cycle of degree $1$. Let $\beta \in \CH(X \times \mathcal{Y})$ be a preimage of $x$ under the flat pull-back
$$\Ch(\mathfrak{X} \times \mathcal{Y}) \, \, -\!
\!\! \twoheadrightarrow \Ch(\mathcal{Y}_{F(\mathfrak{X})})$$
\noindent along the morphism induced by the generic point of $\mathfrak{X}$. Since $\overline{\beta}= 1 \times [\pt] + \Sum_{i\in I}a_i\times b_i$, where $\dim b_i>0$ for all $i \in I$, we have 
 $$\overline{(pr_\mathfrak{X})_*(\alpha \beta)} =  (pr_\mathfrak{X})_*(\overline{\alpha\beta}) = (pr_\mathfrak{X})_* (a \times [\pt] +  \Sum_{i\in I}(aa_i\times b_i)) =a \, ,$$
 where $pr_\mathfrak{X}$ is the projection $\mathfrak{X}\times \mathcal{Y} \rightarrow \mathfrak{X}$ on the first factor.
 
 It follows from the above equality that the cycle $a$ is $F$-rational.

\end{proof}

\begin{dfn}
\label{jGA}
Let $G$ be a semisimple algebraic group of inner type, $A$ a Tits algebra of $G$ and $X$ a projective homogeneous variety such that 
$U(X) \simeq R_p(G)$. 
Let $w \in \Ch^1(\overline{X})$ be a cycle such that for some lifting $\tilde{w} \in \CH^1(\overline{X})$ of $w$ holds $\alpha_X(\tilde{w})=l [A] \in \Br(F)$, where $l$ is an integer coprime to $p$. We define $j_{G,A}$ to be the smallest integer $j\geq 0$ such that the cycle $w^{p^j} \in \Ch(\overline{X})$ is $F$-rational.

\end{dfn}

\begin{rem}
Note that the definition of $j_{G,A}$ does not depend on the choice of the variety $X$ and the cycle $w$. Indeed, if $Y$ and $v$ is another choice of a variety and a cycle satisfying the above conditions, then using the exact sequence (\ref{Br}) for the variety $X\times Y$
we obtain an $F$-rational cycle $w\times 1 -\lambda (1\times v) \in \Ch(\overline{X\times Y})$ for some $\lambda \in \F_p^*$. Then for every $j \geq 0$ we have 
$$(w\times 1 -\lambda (1\times v))^{p^{j}} = w^{p^{j}}\times 1 -\lambda (1\times {v}^{p^{j}}) \in \overline{\Ch}(X\times Y) \, .$$
Hence, we get the  following equivalences 
$$w^{p^{j}} \in \overline{\Ch}(X) \iff w^{p^{j}}\times 1 \in \overline{\Ch}(X \times Y) \iff 1\times {v}^{p^{j}} \in \overline{\Ch}(X \times Y) \iff  {v}^{p^{j}} \in \overline{\Ch}( Y) \, , $$
where the first and the last equivalences hold by Lemma \ref{ax1}.

\end{rem}

\section{Main result}
Let $F$ be a field and $p$ a prime number.
Let $G$ be a semi-simple algebraic group over a field $F$ of inner type.
Let $J(G)=(j_1, ..., j_r)$ be the $J$-invariant of $G$ modulo $p$ and let $d_1\leq \dots \leq d_r $ be the respective degrees of the components. Denote by $J^1(G) = (j_1,...,j_l)$, $l \leq r$, the family of all components of degree $1$, that is $d_l=1$ and $d_{l+1}>1$.

Let $A$ be a Tits algebra of $G$. Denote by $F_A$ the function field of the Severi-Brauer variety $\SB(A)$. The main theorem of this paper describes the $J$-invariant of $G_{F_A}$ in terms of the $J$-invariant of $G$ over a base field $F$.

\begin{thm}
\label{main}
Let $J(G_{F_A})=(j'_1, ..., j'_r)$. Then the following holds 
\begin{enumerate}
    \item $j_i=j'_i$ for every component of degree $>1$.
    \item $J^1(G) \cup \{0\} = J^1(G_{F_A}) \cup \{j_{G,A}\}$ as multisets.
   \item In particular, if $J(G) \neq J(G_{F_A}) $, then $j_k \neq j'_k=0$ for some component $j_k$ of degree $1$.
\end{enumerate}

\end{thm}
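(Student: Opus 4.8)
The plan is to pass from the abstract $J$-invariant to the geometry of the motive $R_p(G)$ via Propositions \ref{gen-split} and \ref{Poincare-rat}, and then to control how rational cycles on a suitable generically split variety behave under the scalar extension $F\to F_A$. Let $X$ be a projective homogeneous $G$-variety with $U(X)\simeq R_p(G)$ (for instance $X=G/B$); then $\overline{\Ch}(X)$ is a truncated polynomial-type ring whose Poincar\'e polynomial is $P(X,t)/P(R_p(G),t)$, and the $J$-invariant is read off from which powers $e_i^{p^{j_i}}$ are hit by rational elements. The first step is to record that $F_A/F$ is a generic splitting field of the Tits algebra $A$, so by the index reduction formula the group $G_{F_A}$ is again of inner type and $X_{F_A}$ is still generically split; in particular Propositions \ref{gen-split}--\ref{Poincare-rat} apply over $F_A$ as well, and the comparison of $J(G)$ with $J(G_{F_A})$ is equivalent to comparing $\overline{\Ch}(X)$ with $\overline{\Ch}(X_{F_A})$ inside the fixed ring $\Ch(\overline{X})$.

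\textbf{Degree $>1$ components.} For part (1) the key point is that extending scalars to $F_A$ can only enlarge the subring of rational cycles, so $j'_i\le j_i$ for all $i$; the content is the reverse inequality for $d_i>1$. Here I would use the product trick of Lemma \ref{ax1} together with the exact sequence \eqref{Br}. Concretely, take $Y=\SB(A)$, so that $F_A=F(Y)$, and study rational cycles on $\overline{X\times Y}$. A cycle on $\overline{X}$ becomes rational over $F_A=F(Y)$ precisely when $a\times 1$ is rational on $\overline{X\times Y}$ after correcting by cycles supported over the generic point of $Y$; but the only ``new'' rational classes coming from $Y$ live in the image of $\Pic(\overline Y)$ under the geometric transfer, hence in codimension $1$. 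An element of $\overline{\Ch}(X_{F_A})$ with greatest monomial $e_i^{p^{j'_i}}$, $d_i>1$, when pulled back to $\overline{X\times Y}$ and combined with the degree-one correction classes coming from $A$, must already be defined over $F$ up to monomials of strictly smaller order, because the correction term $\lambda(1\times v)$ from the Definition \ref{jGA}--remark computation is concentrated in codimensions that are multiples of $1$ but, more importantly, its interaction with $a$ raises codimension by a multiple of the degree of $v$, which is $1$; so it cannot contribute a $d_i$-homogeneous leading term for $d_i>1$ without a genuinely $F$-rational partner. Making this precise is the main obstacle: one must show that the $F_A$-rational subring is generated over the $F$-rational subring by a single degree-one class $w$ attached to $[A]$ (the class of Definition \ref{jGA}), so that $\overline{\Ch}(X_{F_A})=\overline{\Ch}(X)[w]$ inside $\Ch(\overline X)$, with $w^{p^{j_{G,A}}}$ already $F$-rational.

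\textbf{Degree $1$ components and the multiset identity.} Granting the structural statement $\overline{\Ch}(X_{F_A})=\overline{\Ch}(X)[w]$ with $w\in\Ch^1$, $w^{p^{j_{G,A}}}\in\overline{\Ch}(X)$, part (2) is a computation with Poincar\'e polynomials. By Proposition \ref{Poincare-rat}, $P(\overline{\Ch}(X),t)=P(X,t)/P(R_p(G),t)=\prod_{i=1}^r\tfrac{1-t^{d_ip^{j_i}}}{1-t^{d_i}}$ and similarly $P(\overline{\Ch}(X_{F_A}),t)=\prod_{i=1}^r\tfrac{1-t^{d_ip^{j'_i}}}{1-t^{d_i}}$, since $P(X,t)$ and $P(R_p(G),t)$ are insensitive to the base field (the group has the same type). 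Adjoining $w$ with $w^{p^{j_{G,A}}}$ the first $F$-rational power multiplies the Hilbert series in degree-$1$ generators by exactly $\tfrac{1-t^{p^{j_{G,A}}}}{1-t}$, so
\[
\prod_{i=1}^r\frac{1-t^{d_ip^{j_i}}}{1-t^{d_i}}\cdot\frac{1-t^{p^{j_{G,A}}}}{1-t}
=\prod_{i=1}^r\frac{1-t^{d_ip^{j'_i}}}{1-t^{d_i}}.
\]
Cancelling the $d_i>1$ factors using part (1) and clearing denominators gives the identity of polynomials $\bigl(1-t^{p^{j_{G,A}}}\bigr)\prod_{i=1}^{l}(1-t^{p^{j_i}})=(1-t)\prod_{i=1}^{l}\bigl(1-t^{p^{j'_i}}\bigr)$, i.e. equality of the two multisets of ``cyclotomic'' factors after moving one trivial factor $1-t^{p^0}=1-t$ across. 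By unique factorization of $1-t^{p^a}$ into cyclotomic polynomials (the exponents are prime powers, so these factorizations determine the $a$'s), this forces $\{j_1,\dots,j_l\}\cup\{0\}=\{j'_1,\dots,j'_l\}\cup\{j_{G,A}\}$ as multisets, which is (2); and (3) is immediate, since if the two $J$-invariants differ, then after matching degree-$>1$ parts the multiset identity can only be satisfied if some degree-$1$ component $j_k$ is replaced by $0=j'_k$ with $j_{G,A}$ absorbing the old value $j_k$. The main work, and the step I expect to be hardest, is thus the structural lemma identifying $\overline{\Ch}(X_{F_A})$ with $\overline{\Ch}(X)[w]$ --- controlling that no rational cycles beyond those forced by the single Brauer class $[A]$ appear over the generic splitting field --- for which I would combine Lemma \ref{ax1}, the exact sequence \eqref{Br}, and the theory of upper motives (Proposition \ref{prop-upper}) to compare $U(X_E)$ with $U((X\times Y)_E)$ over intermediate fields.
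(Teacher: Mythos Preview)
Your overall strategy --- compare the subrings of rational cycles over $F$ and $F_A$ and translate into a Poincar\'e-polynomial identity --- is the right one, and is also the paper's. But the whole argument rests on the structural claim $\overline{\Ch}(X_{F_A})=\overline{\Ch}(X)[w]$ (free of rank $p^{j_{G,A}}$), and this is precisely the step you leave unproved. Your sketch for it (``combine Lemma~\ref{ax1}, the exact sequence~\eqref{Br}, and upper motives'') does not get there: the exact sequence~\eqref{Br} only controls codimension-$1$ cycles, and there is no a~priori reason every $F_A$-rational cycle in higher codimension must be a polynomial in a single degree-$1$ class with $F$-rational coefficients. The paper avoids this difficulty by never working inside $\overline X$ alone. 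Instead it passes to the product $X\times\SB(A)$, which is again generically split (for $G\times\PGL_1(A)$, with the same $R_p$ by Proposition~\ref{prop-upper}), and proves the concrete Lemma~\ref{lemma1}: from any $F$-rational cycle $a_k\times h^k+\sum_{i>k}a_i\times h^i$ one extracts both the $F$-rationality of $1\times h^k$ and the $F_A$-rationality of $a_k$. This yields Proposition~\ref{rat-cycles}, a basis of $\overline{\Ch}(X\times\SB(A))$ of the form $\{y_i\cdot(1\times h^j):j\in J\}$, where $J$ is the set of $F$-rational hyperplane powers. The set $J$ is then pinned down a~posteriori --- it is symmetric (from the polynomial identity) and closed under addition (trivially), hence $F(t)=\sum_{j\in J}t^j=(t^n-1)/(t^j-1)$, and one checks $j=p^{j_{G,A}}$. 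Only after this does the multiset identity drop out; you are effectively assuming its conclusion as your structural lemma.

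Two smaller points. First, your Poincar\'e identification is inverted: $\prod_i\tfrac{1-t^{d_ip^{j_i}}}{1-t^{d_i}}$ is $P(R_p(G),t)$, \emph{not} $P(\overline{\Ch}(X),t)$; the correct relation is $P(R_p(G),t)=P(R_p(G_{F_A}),t)\cdot\tfrac{1-t^{p^{j_{G,A}}}}{1-t}$, the reverse of what you wrote. Second, your informal ``codimension'' argument for part~(1) is not how the paper proceeds and is not convincing as stated: the paper first reduces (by a coprime-to-$p$ extension) to the case $\deg A=p$-power, so that the left side of the key polynomial equation is $(t^n-1)/(t-1)$ with $n$ a $p$-power, and then observes that a primitive $d_ip^{j_i}$-th root of unity with $d_i>1$ cannot be a root of it. Without that reduction the root argument fails, and your degree-$1$ heuristic does not substitute for it.
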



Let $X$ be a generically split $G$-variety and let $A$ be an algebra which splits over a function field of $X$ (in particular, this is the case for a Tits algebra of $G$). Before proving the main theorem we need to investigate the relation between $F$-rational cycles on the variety $\overline{X\times \SB(A)}$ and $F_A$-rational cycles on the variety $\overline{X}$.
Note that $\overline{\SB(A)} \simeq \mathbb{P}^{\, \deg (A) -1} $
and denote by $h$ the hyperplane class in $\Ch^1(\overline{\SB(A)})$.

\begin{lemma}
\label{lemma1}
Let $y= a_k \times h^k + \Sum_{i>k} a_i\times h^i$  be a homogeneous element in $\Ch(\overline{X\times \SB(A)})$, where $a_i \in \Ch(\overline{X})$. If $y$ is rational over $F$, then 
\begin{enumerate}
\item $1 \times h^k$ is rational over $F$,
\item $a_k$ is rational over $F_A$.
\end{enumerate}
\end{lemma}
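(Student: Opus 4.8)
The plan is to exploit the projective bundle structure of $\overline{\SB(A)}$ over a splitting field of $G$, so that $\Ch(\overline{X\times\SB(A)})$ is a free $\Ch(\overline{X})$-module with basis $1\times h^0,\dots,1\times h^{m}$ where $m=\dim\SB(A)$. The hypothesis that $A$ splits over the function field of $X$ forces the group $G$, hence the variety $\SB(A)$, to split over $F(X)$, which is exactly the input needed to apply the techniques of the previous section. I would first prove (1): project $y$ and its geometric lift appropriately to isolate the leading coefficient. Concretely, consider the correspondence-style product of $y$ with a rational cycle supported on $X$, or push forward along $pr_{\SB(A)}$ after multiplying by a suitably chosen rational cycle of the form $z\times h^{N-k}$ for an appropriate complementary dimension $N$; since $a_k$ pushes to its degree and the higher terms $a_i\times h^i$ with $i>k$ push to zero for dimension reasons, one extracts a rational multiple of $1\times h^k$. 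The key point is that $1\in\overline{\Ch}^0(X)$ is always rational, and the pushforward of a rational cycle is rational, so this produces $c\cdot(1\times h^k)$ rational with $c\in\F_p$; to see $c\neq 0$ one checks that the pushforward of $y$ itself (or of $a_k\times h^k$) against the fundamental class picks out the nonzero coefficient of $h^k$, using that $a_k$ is a nonzero homogeneous cycle (we may assume $a_k\neq 0$, else replace $k$ by the next index).

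Granting (1), for (2) the idea is the standard "restrict to the generic fibre" argument in the spirit of Lemma~\ref{ax1}: pass to $F_A=F(\SB(A))$, over which $\SB(A)$ acquires a rational point, giving a section and in particular a flat pullback $\Ch(\overline{X\times\SB(A)})\twoheadrightarrow\Ch(\overline{X_{F_A}})$ compatible with rationality. Under the composite $\Ch(X\times\SB(A))\to\Ch(X_{F_A}\times\SB(A)_{F_A})\to\Ch(X_{F_A})$ induced by a rational point of $\SB(A)_{F_A}$ chosen as the generic point of $\SB(A)$, the cycle $y$ maps to $a_k$ up to the contributions of the terms $a_i\times h^i$; because over $F_A$ the class $h$ restricted to a rational point is trivial in positive degree while $h^0$ restricts to $1$, all terms with $i>k$ and the term with $i=k$ must be handled by choosing the point to kill $h^{>0}$. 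More carefully, since by part (1) we already know $1\times h^k$ is $F$-rational, we may subtract $a_k\times(1\times h^k)$?—no; instead the cleanest route is: the restriction homomorphism along the generic point of $\SB(A)$ sends $h$ to $0$ in $\Ch^{>0}$, hence sends $y$ to $a_k$ if $k=0$, and in general one first twists by dividing out the rational class $1\times h^k$. I would therefore argue that $y/(1\times h^{k})$ makes sense after intersecting with a rational cycle computing the Poincaré-dual of $h^k$ in the fibre, reducing to the case $k=0$, and then $a_0$ is the image of a rational cycle under the rationality-preserving restriction to $F_A$, hence $F_A$-rational.

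The main obstacle I anticipate is part (2): making precise the passage from "the coefficient $a_k$ of $h^k$ in a rational cycle" to "$a_k$ is rational after base change to $F_A$" without circularity, since naive restriction along a rational point of $\SB(A)_{F_A}$ only directly recovers the $h^0$-coefficient. The resolution should be to combine part (1) with the projective-bundle decomposition: write $y=\sum_{i\ge k}a_i\times h^i$, note $1\times h^j$ is $F$-rational for all $j\le k$ is NOT claimed, so instead use that over $F_A$ the variety $\SB(A)_{F_A}$ has a rational point and the pullback to that point of $y$ equals $a_k\cdot(h^k|_{pt})+\sum_{i>k}a_i\cdot(h^i|_{pt})$; choosing the point as the generic point of $\SB(A)$ and using that $h|_{pt}=0$ in $\Ch^{>0}$ of a point would kill everything, so one instead needs the "relative cellular" pushforward: push $y$ forward to $X_{F_A}$ along the projection $X_{F_A}\times\SB(A)_{F_A}\to X_{F_A}$ after multiplying by the rational cycle $1\times h^{m-k}$, which over any field computes the coefficient of $h^k$. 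That pushforward is $F_A$-rational because both $y_{F_A}$ and $1\times h^{m-k}$ are (the latter by part (1), the former by base change), and it equals $a_k$ up to a unit in $\F_p$. This is the step that requires care, and I expect writing it cleanly will consume most of the proof; everything else is formal manipulation of rationality under $\res$ and pushforward plus the projective bundle theorem.
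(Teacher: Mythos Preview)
Your argument for part~(2) eventually lands on the right idea, but you have the difficulty backwards: (2) is the trivial half. Over $F_A$ the variety $\SB(A)$ is a projective space, so \emph{every} power $1\times h^j$ is $F_A$-rational---not by part~(1), as you write, but simply because $A$ splits over $F_A$. Multiply $y_{F_A}$ by the $F_A$-rational cycle $1\times h^{\deg A-1-k}$ and push forward along $pr_X$; the result is $a_k$. The paper does exactly this in two lines, before turning to~(1).

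Part~(1) is where your proposal has a genuine gap. Your plan is to multiply $y$ by some $F$-rational $z\times h^{N-k}$ and push forward along $pr_{\SB(A)}$ to extract a nonzero multiple of $h^k$. For this to succeed you need $z\in\Ch(\overline{X})$ with $\deg(z\cdot a_k)\neq 0$ in~$\F_p$ \emph{and} $z$ rational over~$F$. Nothing in your setup produces such a $z$: the existence of an $F$-rational Poincar\'e dual to an arbitrary $a_k$ is exactly the kind of statement one is trying to prove, not assume. Your suggestion to take $z=1$ (``the fundamental class'') only works when $a_k$ is a zero-cycle, which it is not in general.

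The missing ingredient is the hypothesis that $X$ is \emph{generically split}, which you never invoke. The paper exploits it by passing to the triple product $X\times X\times\SB(A)$. Over $F(X\times\SB(A))$ the first factor $X$ splits, so a dual class $a_k^*$ with $\deg(a_k\cdot a_k^*)=1$ exists in $\Ch(X_{F(X\times\SB(A))})$; lift it to an $F$-rational cycle $\alpha$ on the triple product via the surjective pullback to the generic fibre. Writing $\beta=(pr_{2,3})^*y$, one computes that $(pr_{2,3})_*(\alpha\cdot\beta)=1\times h^k$, and this is $F$-rational because $\alpha$ and $\beta$ are. In short: the dual class you need is not $F$-rational on $X$, but becomes available once you adjoin a second copy of $X$ and use generic splitness.
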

\begin{proof}
We first prove the second statement. We have
$$(pr_1)_*(y \cdot (1\times h^i)) = (pr_1)_*(a_k\times [pt] + \dots) = a_k \, ,$$
where $i= \deg (A) -1- k$, $[pt]$ is the class of a rational point in $\Ch(\overline{\SB(A)})$ and $pr_1: \overline{X\times \SB(A)} \rightarrow \overline{X}$ is the projection on the first factor. Since $y$ and $1\times h^i$ are both rational over $F_A$, it follows that $a_k$ is also rational over $F_A$. \\
We now prove the first statement. Consider the surjective pullback
$$f: \Ch(X\times(X\times \SB(A))) \twoheadrightarrow \Ch(X_{F(X\times \SB(A))}) \,.$$
\noindent Since $X$ is split over $F(X \times \SB(A))$, there exists a cycle $a_k^*$ in $\Ch(X_{F(X\times \SB(A))})$, such that $\deg(a_k\cdot a_k^*) =1 $. Let $\alpha$ be the image in   $\Ch(\overline{X\times(X\times \SB(A))})$ of some lifting of $a_k^*$ via $f$. Let $\beta = (pr_2)^*(y)$, where $pr_2: \overline{X\times X \times \SB(A)} \rightarrow  \overline{X\times \SB(A)}$ is the projection on the second factor. We have
$$ \beta = a_k \times 1 \times  h^k + \Sum_{i>k} a_i \times 1 \times h^i \, $$
\noindent and
$$\alpha = a_k^* \times (1 \times 1) + \Sum_{j \in I} b_j \times (c_j) \, ,$$
where $\codim c_j> 0$ and $\codim b_j < \codim a_k^*$ for all $j \in I$. Then
$$ \alpha \cdot \beta = [pt]\times 1 \times h^k + \Sum_{j \in I} b'_j \times (c'_j)\, ,$$
\noindent where $b'_j \in \Ch(\overline{X})$ and $\dim b'_j >0$ for all $j \in I$. Hence, $(pr_{2,3})_*( \alpha \cdot \beta) = 1 \times  h^k$, where $pr_{2,3}: \overline{X\times(X\times \SB(A))} \rightarrow \overline{X\times \SB(A)}$ is the projection on the product of second and third factors. Since both cycles $\alpha$ and $\beta$ are $F$-rational, the cycle $1 \times  h^k$ is also $F$-rational.
\end{proof}

Let $\mathcal{A}=\{a_1, ..., a_s\}$ be a basis of $\F_p$-vector subspace of $F_A$-rational cycles in $\Ch(\overline{X})$. For every $i=1, ..., s$ we fix an $F$-rational lifting $y_i$ of $a_i$ via the surjective pullback.
$$ \Ch(\overline{X\times \SB(A)}) \twoheadrightarrow \Ch(\overline{X_{F_A}}) \,.$$
\noindent Consider the set
\begin{equation}
\label{1xh}
J=\{0\leq j < \deg A \mid 1\times h^j \text{ is }F\text{-rational in }\Ch(\overline{X\times \SB(A)})    \} \, .
\end{equation}

Recall that we denote by $\overline{\Ch}(X\times \SB(A))$ the subring of $F$-rational cycles in $\Ch(\overline{X\times \SB(A)}) $.
We can now decribe a basis of the $\F_p$-vector space $\overline{\Ch}(X\times \SB(A)) $  in terms of $\mathcal{A}$ and $J$. 
\begin{prop}
\label{rat-cycles}
The set $\mathcal{B}=\{ y_i \cdot (1\times h^j) \mid i \in [1,s], j \in J\}$ form a basis of the $\F_p$-vector space  $\overline{\Ch}(X\times \SB(A)) $.
\end{prop}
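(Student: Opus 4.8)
The plan is to establish that $\mathcal{B}$ is both linearly independent and spanning, using Proposition \ref{Poincare-rat} to pin down the dimension of $\overline{\Ch}(X\times \SB(A))$ as a cross-check on the count $|\mathcal{B}| = n \cdot |J|$. First I would observe that the elements of $\mathcal{B}$ are indeed $F$-rational: each $y_i$ is $F$-rational by construction and each $1\times h^j$ with $j\in J$ is $F$-rational by definition of $J$, so the product lies in $\overline{\Ch}(X\times \SB(A))$. For linear independence, I would extend scalars to a splitting field and use the decomposition $\Ch(\overline{X\times\SB(A)}) = \bigoplus_i \Ch(\overline{X}) \times h^i$; writing $\bar y_i = a_i \times 1 + (\text{terms in higher powers of } h)$ (this is what it means for $y_i$ to lift $a_i$ via the restriction to the generic point of $\SB(A)$), one sees that the ``leading term'' of $y_i \cdot (1\times h^j)$ in the $h^j$-graded piece is $a_i \times h^j$. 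Since the $a_i$ are linearly independent in $\Ch(\overline{X})$ and distinct $j$'s land in distinct graded pieces, a triangularity argument in the $h$-grading shows that the $\bar y_i\cdot(1\times h^j)$ are linearly independent, hence so are the $y_i\cdot(1\times h^j)$.

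For the spanning claim, I would take an arbitrary homogeneous $F$-rational cycle $z \in \overline{\Ch}(X\times\SB(A))$ and induct on the smallest power $h^k$ appearing in $\bar z$, i.e. write $\bar z = b_k\times h^k + \sum_{i>k} b_i \times h^i$. By Lemma \ref{lemma1}, applied to $z$, we get that $1\times h^k$ is $F$-rational (so $k\in J$) and that $b_k$ is $F_A$-rational (so $b_k$ is an $\F_p$-combination of the basis $\mathcal{A}$, say $b_k = \sum_i \lambda_i a_i$). Then $z - \sum_i \lambda_i\, y_i\cdot(1\times h^k)$ is an $F$-rational cycle whose extension to the splitting field has its lowest $h$-term in degree strictly greater than $k$ (because $\bar y_i \cdot (1\times h^k) = a_i\times h^k + (\text{higher } h\text{-terms})$). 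Iterating, $z$ is expressed as an $\F_p$-combination of elements of $\mathcal{B}$.

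The step I expect to be the main obstacle is making the ``lowest $h$-term'' bookkeeping fully rigorous — in particular verifying that $J$ is downward-closed in the appropriate sense, or rather that every $k$ that can arise as a lowest term of a rational cycle already lies in $J$, which is exactly the content of Lemma \ref{lemma1}(1), so one must be careful that the induction only ever invokes Lemma \ref{lemma1} on cycles to which it applies. A secondary subtlety is confirming that $\bar y_i = a_i \times 1 + \sum_{i>0}(\ast)\times h^i$ has \emph{no} $h^0$-component other than $a_i$ and no negative obstruction: this follows since $y_i$ was chosen to restrict to $a_i$ under $\Ch(\overline{X\times\SB(A)}) \twoheadrightarrow \Ch(X_{F(\SB(A))})$, and this restriction map is precisely the projection killing all $h^i$ with $i>0$. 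Once these are in place, linear independence plus spanning gives the result; alternatively, having proved independence, one can finish by comparing $|\mathcal{B}| = n\cdot|J|$ with $\dim_{\F_p}\overline{\Ch}(X\times\SB(A))$ computed via Proposition \ref{Poincare-rat} and the known Poincaré polynomial of $\mathcal{R}_p(G)$, though the direct spanning argument seems cleaner here.
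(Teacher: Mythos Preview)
Your proposal is correct and follows essentially the same approach as the paper: both arguments use Lemma~\ref{lemma1} to peel off the lowest $h^k$-term of an $F$-rational cycle, showing $k\in J$ and the coefficient lies in the span of the $a_i$, then subtract a suitable combination of the $y_i\cdot(1\times h^k)$ to raise the lowest $h$-power. The only cosmetic difference is that the paper phrases this as a contradiction (choose a rational cycle not in $\langle\mathcal{B}\rangle$ with \emph{maximal} lowest $h$-power $k$, then the subtraction yields either zero or a counterexample with larger $k$), whereas you phrase it as a direct induction; your treatment of linear independence via the $h$-filtration is also slightly more explicit than the paper's one-line ``since the $a_i$ are linearly independent.''
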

\begin{proof}
Since $a_i$, $i \in [1,s]$ are linearly independent, the same holds for the elements from $\mathcal{B}$. 
Denote by $\langle \mathcal{B} \rangle$ the $\F_p$-vector subspace in ${\Ch}(\overline{X\times \SB(A)})$ generated by the elements from $\mathcal{B}$. Our goal is to show that $\langle \mathcal{B} \rangle = \overline{\Ch}({X\times \SB(A)})$


Clearly $\langle \mathcal{B} \rangle \subset  \overline{\Ch}({X\times \SB(A)})$. Assume that the subspaces are not equal and among the homogeneous elements in
$\overline{\Ch}({X\times \SB(A)}) \setminus \langle \mathcal{B} \rangle $ choose an element 
$$ y= a \times h^k + \Sum_{i>k} a_i\times h^i, b_i \in \CH(\overline{X}) \, , a \neq 0 \, , $$
\noindent with the maximal $k \geq 0$. 
By Lemma \ref{lemma1} the cycle $a$ is rational over $F_A$ and $k \in J$. Hence, we can write 
$a= \lambda_1 a_1 + \cdots + \lambda_sa_s$ for some $\lambda_i \in \F_p$. Consider the cycle 
$$y'=y - (\lambda_1 y_1 + \cdots + \lambda_sy_s)\cdot(1\times h^k) \, .$$
By our assumption on $y$ we see that $y'=0$ and we get a contradiction. It follows that $\overline{\Ch}({X\times \SB(A)}) =  \langle \mathcal{B} \rangle$ and therefore, $\mathcal{B}$ is indeed a basis of $\overline{\Ch}({X\times \SB(A)})$.
\end{proof}

We are now ready to prove the main theorem.

\begin{proof}[Proof of Theorem \ref{main}]
Recall that we work with Chow groups and motives modulo $p$. 
Without loss of generality we can pass to a field  extension of degree coprime to $p$ and assume
that the index of $A$ is a power of $p$.
Since in this case the algebra $A$ and its $p$-primary component have same splitting fields, we can also assume that the degree $n$ of $A$ is a power of $p$.

Let $X$ be a generically split $G$-variety. 
Then the variety $X\times \SB(A) $ is also generically split for the group $G' =G\times \PGL_1(A)$. Moreover, by Proposition \ref{prop-upper} and our assumption on algebra $A$, we have $\mathcal{R}_p(G) \simeq \mathcal{R}_p(G')$. Applying Proposition \ref{Poincare-rat} for the generically split varieties $X\times \SB(A)$ and $X_{F_A}$ we obtain

$$P(X\times \SB(A), t)= P(\mathcal{R}_p(G), t) \cdot P(\overline{\Ch}(X\times \SB(A)), t)$$
\noindent and
$$P(X_{F_A}, t) = P(\mathcal{R}_p(G_{F_A}), t) \cdot P(\overline{\Ch}(X_{F_A}), t) \, .$$

Let us compare now the left and the right sides of these polynomial equalities. We have
$P(X\times \SB(A), t) = P(X,t) \cdot P(\SB(A), t) =P(X,t) \cdot P(\mathbb{P} ^{n-1}, t) =P(X,t)\frac{t^n-1}{t-1}$. Note also that $P(X,t) =P(X_{F_A},t)$. The first factor on the right hand sides of the equalities can be expressed in terms of the corresponding $J$-invariant  using formula (\ref{formula}). Note that the last factor in the second equality divides the last factor in the first equality by Proposition \ref{rat-cycles} and the quotient is given by the polynomial
$$Q(t)=\Sum_{i \in J} t^i \, ,$$
where the set $J$ is defined in (\ref{1xh}).

Now dividing the first polynomial equality by the second equality we get
\begin{equation}
\label{poly}
\frac{t^n-1}{t-1}= \Prod_{i=1}^{r} \frac{t^{d_ip^{j_i}}-1}{t^{d_ip^{j'_i}}-1} \cdot Q(t) \, .
\end{equation}

The first statement of the theorem follows directly from the above polynomial equality.
Indeed, if $j_i>j'_i$ for some component $i \in \{1,...,r\}$ with $d_i>1$, then the primitive $d_ip^{j_i}$-root of unity $\zeta \in \C$ is a complex root of the right-hand side polynomial from  equality ($\ref{poly}$). However $\zeta$ is not a complex root of the left-hand side polynomial in ($\ref{poly}$), since $n$ is a power of $p$, $d_i>1$ and $p$ does not divide $d_i$.

Before proving statements (2) and (3) of the theorem we will first find the explicit form of the polynomial $Q(t)=\Sum_{i \in J} t^i$, which is defined by the set $J$. It follows from polynomial equality (\ref{poly}), that $Q(t)= t^{\deg Q}Q(1/t)$. Hence, the set $J$ is symmetric with respect to its midpoint. 

Another property 
$$x\in J,\, y\in J \quad \Longrightarrow\quad  x+y\in J$$

\noindent follows from the definition of the set $J$, since the product of two $F$-rational cycles is $F$-rational.

Let $m = \min J\!\setminus \!\{0\}$ (we set $m=n$, if $J=\{0\}$). Using the mentioned above two properties of the set $J$ it is easy to check that
$$Q(t)= 1+t^m+t^{2m}+ ... +t^{(k-1)m}=\frac{t^{km}-1}{t^{m}-1} $$
for some integer $k \geq 1$, such that  $(k-1)m<n \leq km$.

Finally, it follows from (\ref{poly}), that $km$ is a power of $p$ and, thus, is equal to $n$. Moreover, $m$ is also a power of $p$ and we write $m= p^j$. Therefore, the polynomial $Q(t)$ has the following form $\frac{t^{n}-1}{t^{p^j}-1}$.

Using the description of $Q(t)$ and the statement (1) of the theorem and  formula (\ref{poly}) we get
$$(t^{p^j}-1)\Prod_{i=1}^{l}{(t^{p^{j'_i}}-1)} = (t-1) \Prod_{i=1}^{l}(t^{p^{j_i}}-1) \, .$$
\noindent It follows from the above polynomial equality that $J^1(G_{F_A}) \cup \{j\} = J^1(G) \cup \{0\}$ as multisets. Moreover, by the definition of $j$ we have $j=j_{G,A}$ and we obtain the statement (2) of the theorem.

In the case $J(G_{F_A}) \neq J(G)$, by statement (2), we have $j \neq 0$. Hence,  $j_k \neq j'_k=0$ for some $k \in \{1, ...,r\}$, which proves the statement (3) of the theorem.
\end{proof}

With the same notations as in the above theorem, we get the following corollary.
\begin{cor}
Assume that there exists a component $j_i$ of degree $1$ of $J(G)$, such that all other components of degree one are zero (in particular, this is the case when there is only one component of degree $1$). Assume also that $j_i$ becomes zero over $F_A$. Then $j_i = j_{G,A}$.
\end{cor}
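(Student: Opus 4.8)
The plan is to deduce this corollary directly from Theorem \ref{main}(2). Recall that that statement asserts the equality of multisets
$$J^1(G) \cup \{0\} = J^1(G_{F_A}) \cup \{j_{G,A}\}.$$
First I would record what the hypotheses say about the left-hand side: by assumption $J^1(G)$ consists of the component $j_m$ (of degree $1$) together with some number of zeros, so as a multiset $J^1(G) = \{j_m, 0, \dots, 0\}$, and hence $J^1(G) \cup \{0\} = \{j_m, 0, \dots, 0\}$ with one more zero than before.

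Next I would analyze the right-hand side using the remaining hypothesis, namely that $j_m$ becomes zero over $F_A$. Since $j_m$ is the only possibly-nonzero component of degree $1$ of $J(G)$, and by Theorem \ref{main}(1) every component of degree $>1$ is unchanged, the group $G_{F_A}$ has all components of degree $1$ equal to zero; that is, $J^1(G_{F_A})$ is a multiset consisting entirely of zeros, of the same length $l$ as $J^1(G)$. Therefore $J^1(G_{F_A}) \cup \{j_{G,A}\} = \{0,\dots,0\} \cup \{j_{G,A}\}$, a multiset of $l+1$ entries, all of which are zero except possibly $j_{G,A}$.

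Comparing the two multisets of size $l+1$: the left-hand side has exactly one entry equal to $j_m$ and all others zero; the right-hand side has all entries zero except possibly $j_{G,A}$. If $j_m = 0$ then both multisets are all zeros, forcing $j_{G,A} = 0 = j_m$. If $j_m \neq 0$, then the left-hand multiset has exactly one nonzero entry, equal to $j_m$, so the right-hand multiset must also have exactly one nonzero entry, which can only be $j_{G,A}$, and it must equal $j_m$. In either case $j_{G,A} = j_m$, as claimed. I do not anticipate a genuine obstacle here — the content is entirely in Theorem \ref{main}, and the only care needed is the bookkeeping that the hypothesis "$j_m$ becomes zero over $F_A$" together with Theorem \ref{main}(1) really does force $J^1(G_{F_A})$ to be the all-zero multiset, so that the extra mass $j_m$ on the left must be absorbed by $j_{G,A}$ on the right.
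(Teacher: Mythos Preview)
Your proof is correct and is precisely the direct deduction from Theorem~\ref{main}(2) that the paper intends (the paper states the corollary without proof immediately after the theorem). One small remark: your appeal to Theorem~\ref{main}(1) in the middle is not what justifies that the \emph{other} degree-$1$ components remain zero over $F_A$ --- that part of the theorem concerns components of degree $>1$; the reason the zero degree-$1$ components stay zero is simply that every $j_i$ can only decrease under a field extension (immediate from the definition of the $J$-invariant), which you are implicitly using.
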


\section{$J$-invariant of algebras with involutions}

Let $(A, \sigma)$ be a degree $2n$ central simple algebra over $F$, endowed with an involution of orthogonal type and trivial discriminant. Recall, that the Clifford algebra of $(A, \sigma)$ splits as a direct product $C(A,\sigma)=C_+ \times C_-$ of two central simple algebras over $F$.

Let $G= \PGO^+(A,\sigma)$ be the connected component of the automorphism group of $(A, \sigma)$.
Since $(A,\sigma)$ has trivial discriminant, the group $G$ is an inner twisted form of $G_0= \PGO^+_{2n} $. Both groups $G$ and $G_0$ are adjoint of type $\mathrm{D}_{n}$. Let $\{\omega_1,...,\omega_n\}$ be the respective set of fundamental weights. Note that $A$ is a Tits algebra $A_{\omega_1}$ of $G$. We fix fundamental weights $\omega_+$ and $\omega_-$, which are are a permutation of $\omega_{n-1}$ and $\omega_n$, in such a way that the Tits algebras $A_{\omega_+}$ and $A_{\omega_-}$ are respectively the components $C_+$ and $C_-$ of the Clifford algebra  $C(A, \sigma)$.



Note that in this section we assume $p=2$, which is the only torsion prime of the group $G$. 

Let $X$ be the variety of Borel subgroups in $G$. Recall, that the Picard group $\Pic(\overline{X})$ can be identified with a free $\Z$-module generated by $\omega_i$, $i=1, ...,n$. We denote by $w_i$ the images of $\omega_i$ in $\Ch^1(\overline{X}) = \CH^1(\overline{X}) \otimes \F_2$. 

In \cite{QSZ} Qu\'eguiner-Mathieu, Semenov and Zainoulline introduced the notion of the $J$-invariant of algebras with orthogonal involutions. The $J$-invariant of $(A,\sigma)$ is denoted by $J(A, \sigma)$. By definition, $J(A, \sigma)$ is the $J$-invariant of the respective group $G= \PGO^+(A,\sigma)$, where in the definition of $J(G)$ we take a cocycle whose class corresponds to $(A, \sigma)$ and a designation of the components $C_+$ and $C_-$ (note that the choice of the designation does not affect the value of $J(G)$ see \cite[\S 3]{QSZ}).


The $J$-invariant $J(A,\sigma)$ is an $r$-tuple $(j_1, ... ,j_r)$, where $r=m+1$ if $n=2m$ or $n=2m+1$ (see table \cite[\S 4.13]{PSZ}). Note that the first two components $j_1$ and $j_2$ are of degree $1$ and $d_i=2i-3$ for $i\geq 2$.  For every component $j_i$ we also have an explicit upper bound $k_i$ (see table \cite[\S 4.13]{PSZ}). In particular, $j_1 \leq k_1 = v_2(n)$, where $v_2(-)$ denotes the $2$-adic valuation. According to \cite[\S 3]{QSZ}  one can take  $e_1 = \pi(w_1)$ and $e_2 = \pi(w_+)$ for the generators in $\Ch(G_0)$ corresponding respectively to the components $j_1$ and $j_2$ (see section \ref{J-invariant}).


The goal of this section is to prove Conjecture \ref{conj} and to explicitly compute $j_1$.

By \cite[Corollary 5.2]{QSZ} the first component $j_1$ of $J(A, \sigma)$ is zero if the algebra $A$ is split. We denote by  $F_A$  the function field of the Severi-Brauer variety of $A$, which is a generic splitting field of $A$. By Theorem \ref{main} we have $J(A, \sigma)_{F_A}= (0, j'_2, j_3, ..., j_r)$, that is all components of degree $> 1$ does not change over $F_A$. However, to prove Conjecture \ref{conj} we still need to check that $j_2=j'_2$. In order to show this we will use the following proposition.


\begin{prop}
\label{j_1=jGA}
The first component $j_1$ of the $J(A, \sigma)$ is equal to $j_{G,A}$, where $G= \PGO^+(A,\sigma)$.
\end{prop}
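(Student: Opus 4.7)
The plan is to prove the equality by establishing both inequalities.

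For $j_1 \le j_{G,A}$, I apply Definition~\ref{jGA} with $w = w_1$: this is legitimate since $\alpha_X(\omega_1) = [A_{\omega_1}] = [A]$ gives $l = 1$. By definition of $j_{G,A}$ the cycle $w_1^{p^{j_{G,A}}}$ is $F$-rational, and since $\pi$ is a ring homomorphism with $\pi(w_1) = e_1$, its image $e_1^{p^{j_{G,A}}}$ lies in $\overline{\Ch}^*(G_0)$ as a cycle with leading monomial $e_1^{p^{j_{G,A}}}$, forcing $j_1 \le j_{G,A}$.

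For the reverse inequality, I need to show that $w_1^{p^{j_1}}$ is $F$-rational on $\overline X$. The starting point is the exact sequence~\eqref{Br} applied to $X \times \SB(A)$: because $\alpha(\omega_1) = [A] = \alpha(h)$, where $h$ is the hyperplane class on $\SB(A)$, the class $\omega_1 - h$ lifts to an $F$-rational line bundle on the product, so working modulo $p = 2$ the cycle $w_1 \times 1 + 1 \times h$ is $F$-rational in $\Ch^1(\overline{X \times \SB(A)})$. Raising to the $p^{j_1}$-th power and invoking additivity of Frobenius at $p = 2$,
\[
(w_1 \times 1 + 1 \times h)^{p^{j_1}} = w_1^{p^{j_1}} \times 1 + 1 \times h^{p^{j_1}}
\]
is $F$-rational on $X \times \SB(A)$. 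Since $A$ is a Tits algebra of $G$ and $X$ is its Borel variety, $\SB(A)$ splits over $F(X)$, so Lemma~\ref{ax1} implies that $w_1^{p^{j_1}}$ is $F$-rational on $\overline X$ if and only if $1 \times h^{p^{j_1}}$ is $F$-rational on $\overline{X \times \SB(A)}$; the problem reduces to proving the latter.

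The definition of $j_1$ supplies $b \in \overline{\Ch}^*(G_0)$ with leading monomial $e_1^{p^{j_1}}$. Taking its homogeneous component of codimension $p^{j_1}$ and observing that any monomial $e^M \neq (p^{j_1},0,\dots,0)$ of this codimension has $m_i > 0$ for some $i > 1$ and therefore exceeds $(p^{j_1},0,\dots,0)$ in the chosen order, I conclude that $e_1^{p^{j_1}}$ itself lies in $\overline{\Ch}^{p^{j_1}}(G_0)$. Lifting, there exists $\tilde a \in \overline{\Ch}^{p^{j_1}}(X)$ with $\pi(\tilde a) = e_1^{p^{j_1}}$. The concluding move is to expand $\tilde a \times 1 \in \overline{\Ch}^{p^{j_1}}(X \times \SB(A))$ in the basis $\{y_i (1 \times h^j) : i \in [1,n],\, j \in J\}$ furnished by Proposition~\ref{rat-cycles}, and to combine it with the $F$-rational cycle $w_1^{p^{j_1}} \times 1 + 1 \times h^{p^{j_1}}$ above; exploiting that over $F_A$ the class $w_1$ becomes $F_A$-rational (so that $w_1^{p^{j_1}}$ itself is available as one of the basis cycles $a_i$), I aim to force a nonzero coefficient at $1 \times h^{p^{j_1}}$ in the expansion, exhibiting $p^{j_1} \in J$.

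The main obstacle will be this final extraction. Two lifts of $e_1^{p^{j_1}}$ to $\Ch^{p^{j_1}}(\overline X)$ differ by an element of $\ker \pi$ which need not itself be $F$-rational, so one cannot directly replace $\tilde a$ by $w_1^{p^{j_1}}$ modulo $F$-rational cycles. The detour through $X \times \SB(A)$ and the combinatorial structure of $J$—which, by the proof of Theorem~\ref{main}, equals the set of nonnegative multiples of $p^{j_{G,A}}$ lying below $\deg A$—is what bridges the gap, converting the existence of the $F$-rational auxiliary cycle $\tilde a \times 1$ into the divisibility $p^{j_{G,A}} \mid p^{j_1}$, i.e., $j_{G,A} \le j_1$.
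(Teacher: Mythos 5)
Your first inequality is fine: taking $w=w_1$ in Definition~\ref{jGA} (with $\alpha_X(\omega_1)=[A]$) and applying the ring homomorphism $\pi$ gives $j_1\le j_{G,A}$, and your reduction of the reverse inequality --- via rationality of $w_1\times 1+1\times h$, the Frobenius identity, and Lemma~\ref{ax1} --- to showing that $1\times h^{p^{j_1}}$ is $F$-rational on $X\times\SB(A)$, i.e.\ $p^{j_1}\in J$, is also correct. But the final step, which is where the whole difficulty of the proposition lives, is not an argument: you only ``aim to force'' a nonzero coefficient at $1\times h^{p^{j_1}}$ and assert that the structure of $J$ bridges the gap. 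It does not. Expanding $\tilde a\times 1$ or $w_1^{p^{j_1}}\times 1+1\times h^{p^{j_1}}$ in the basis of Proposition~\ref{rat-cycles} produces no contradiction with $p^{j_1}\notin J$: since $w_1^{p^{j_1}}$ is $F_A$-rational, its $F$-rational lift has the form $w_1^{p^{j_1}}\times 1+\sum_{k>0}b_k\times h^k$, and the term $1\times h^{p^{j_1}}$ can simply be absorbed into this higher $h$-degree tail; Lemma~\ref{lemma1} controls only the \emph{lowest} $h$-degree term of a rational cycle, never the higher ones. Equivalently, what you must rule out is that your rational representative $\tilde a=w_1^{p^{j_1}}+\delta$ carries a non-rational error $\delta\in\Ker\pi$ --- exactly the obstacle you name --- and nothing in your proposal removes it; note that $\Ker\pi$ already contains the codimension-one class $w_1+w_++w_-$ (for $n$ even), so $\delta$ may involve non-rational multiples of such classes.

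This gap cannot be closed by the formal ingredients you invoke, since they nowhere use that $G$ is of type $D_n$, whereas the statement is not a formal consequence of them. The paper's proof supplies precisely the missing input: for $n$ even it first replaces $w_1$ by $w=w_++w_-$ (using $[A]+[C_+]+[C_-]=0$, so $w$ still has $\alpha_X$-image $[A]$), then projects the rational cycle $w^{2^{j_1}}+\delta$ onto the motivic summand $M(Y)\subset M(X)$, where $Y$ is the generically split variety of isotropic ideals of reduced dimension $n-1$, whose Picard group is spanned by $\omega_+,\omega_-$, and finally uses the generic-cocycle computation (Remark~\ref{generic}, Proposition~\ref{Poincare-rat} and formula~(\ref{formula})) to show that elements of $\Ch(\overline{Y})\cap\Ker\pi$ have codimension at least $2^{k_1}\ge 2^{j_1}$ and are $F$-rational in the boundary case; this kills the error and yields $w^{2^{j_1}}\in\overline{\Ch}(X)$, hence $j_{G,A}\le j_1$ (the case $n$ odd being treated separately, though in your setup it is harmless since then $w_1$ itself is rational). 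Without an argument of this kind your proposal establishes only $j_1\le j_{G,A}$.
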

\begin{proof} Let $X$ be the variety of Borel subgroups in $G$.
Assume $n$ is odd. By the fundamental relation we have $[A] = 2[C_+]=2[C_-] \in \Br(F)$, where $C_+$ and $C_-$ are two components of the Clifford algebra of $(A,\sigma)$. Using the exact sequence (\ref{Br}) we see that $w_1 \in \overline{\Ch}(X)$. Hence, by definition $j_1 =j_{G,A}=0 $.

Assume now that $n$ is even. By definition, $j_1$ is the smallest non-negative integer, such that there exists an $F$-rational homogeneous  cycle of the form ${w_1}^{2^{j_1}} + \delta \in \Ch(\overline{X})$, where $\delta $ is an element from  the kernel of the map $\pi: \Ch(\overline{X}) \rightarrow \Ch(G_0)$. Since $n$ is even, we have $[A]+[C_+]+[C_-]=0 \in \Br(F)$. Hence, the cycle $w_1 + w \in \Ch^1(\overline{X})$ is $F$-rational, where we set $w= w_+ +w_-$. Therefore, in the definition of $j_1$ we can replace $w_1$ by $w$.

Recall that the kernel of $\pi$ is an ideal in $\Ch(\overline{X}) \simeq \Ch(G_0/B)$ generated by the rational elements of positive codimension for the twisted form $_{\xi}(G_0/B)$ given by a generic cocycle $\xi$ (see Remark \ref{generic}). Our goal is to show that we can assume $\delta = 0$ in the definition of $j_1$.

Let $Y$ be a variety of isotropic right ideal in $(A, \sigma)$ of reduced dimension $n-1$ (it corresponds to the parabolic subgroup of type $\{1,2,\ldots,n-2\}$).  We have the natural projection $f:X \rightarrow Y$. Since $Y$ is generically split, the projection $f$ is a cellular fibration.  Therefore, by \cite[Theorem 3.7]{PSZ}  the motive $M(Y)$ is a direct  summand of $M(X)$. Moreover, the Chow group of this motivic summand as a subgroup of $\Ch(X)$ can be identified with $\Ch(Y)$ via the pull-back $f^*$, so we can assume $\Ch(Y) \subset \Ch(X)$. Since the cycle $w$ (and also $w^{2^{j_1}}$) is defined in $\Ch(\overline{Y})$, the restriction of the $F$-rational cycle $w^{2^{j_1}} + \delta \in \Ch(\overline{X})$ on the subgroup $\Ch(\overline{Y})$ is also $F$-rational and has the similar form $w^{2^{j_1}} + \tilde{\delta}$, where $\tilde{\delta} \in \Ch(\overline{Y}) \cap \Ker \pi$. Using Proposition \ref{Poincare-rat} and the explicit formula (\ref{formula}) for the Poincar\'e polynomial  $P(\mathcal{R}_2(_\xi G_0), t)$, where $\xi$ is a generic cocycle,  we get that in the generic case a rational cycle in $\Ch(\overline{Y})$ has codimension either zero or at least $2^{k_1} \geq 2^{j_1}$. It follows that any cycle in $\Ch(\overline{Y}) \cap \Ker \pi$ has codimension at least $2^{k_1}$ and any cycle of codimension $2^{k_1}$ is $F$-rational. Hence, $\tilde{\delta}$ is $F$-rational and $w^{2^{j_1}} \in \overline{\Ch}(X)$.
Since $\alpha_X(\omega_+ + \omega_-)=[A]$, by definition of $j_{G,A}$ we have $j_{G,A}=j_1$.
\end{proof}

\begin{cor}
\label{corconj}
Conjecture \ref{conj} holds.
\end{cor}
\begin{proof}
The Conjecture follows from Theorem \ref{main} and the above proposition. 
\end{proof}

Let $J(A,\sigma) =(j_1 ,j_2, \dots ,j_r)$. Then by the above corollary we have $J(A, \sigma)_{F_A}= (0,j_2, \dots, j_r)$. Note that the group $\PGO^+(A,\sigma)$ over the field $F_A$ becomes isomorphic to $\PGO^+(q_{\sigma})$, where $q_{\sigma}$ is the respective quadratic form adjoint to the split algebra with involution $(A, \sigma)_{F_A}$. Hence, we can reduce the computation of the components $j_2, \dots , j_r$ of $J(A,\sigma)$ to the case of quadratic forms. However, over the field $F_A$ we lose information about the first component $j_1$. Our next goal is to find an explicit formula for $j_1$.

We know that $j_1 \leq k_1 = v_2(n)$. Thus, $j_1 =0$ if $n$ is odd. Starting from now in this section we assume that $n$ is a positive even integer. 

Note that in this case all three algebras $A$, $C_+$ and $C_-$ have exponent $2$. Therefore, the indices of these algebras are powers of $2$. We denote by $i_A$ (respectively by $i_+, i_-$) the $2$-adic valuation of the index of $A$ (respectively of $C_+$, $C_-$).




We start the computation of $j_1$ by collecting upper bounds for $j_1$. Clearly we have 
\begin{equation}
\label{upperbound1}
j_1 \leq k_1 \, . 
\end{equation}
By \cite[Corollary 5.2]{QSZ} we have another upper bound for $j_1$
\begin{equation}
\label{upperbound2}
j_1 \leq i_A \, .
\end{equation} 
Finally, the lemma below shows that 
\begin{equation}
\label{upperbound3}
j_1 \leq \max\{i_+, i_-\} \, .
\end{equation}

\begin{lemma}
\label{i+i-}
Inequality (\ref{upperbound3}) holds.
\end{lemma}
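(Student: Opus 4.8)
The plan is to bound $j_1$ by exhibiting, for $p=2$, an $F$-rational representative of a suitable power of a Picard generator. Throughout let $X$ be the variety of Borel subgroups of $G$, and write $w_+,w_-\in\Ch^1(\overline X)$ for the images of $\omega_+,\omega_-$. Since $n$ is even we have the fundamental relation $[A]=[C_+]+[C_-]$ in $\Br(F)$, so $\alpha_X(\omega_++\omega_-)=[A]$; hence by Proposition \ref{j_1=jGA} together with the Remark following Definition \ref{jGA}, $j_1=j_{G,A}$ is the least $j\ge 0$ for which $(w_++w_-)^{2^j}=w_+^{2^j}+w_-^{2^j}$ is $F$-rational, the equality of the two expressions being the mod-$2$ Frobenius identity. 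Consequently it suffices to prove that $w_+^{2^{i_+}}$ and $w_-^{2^{i_-}}$ are $F$-rational on $\overline X$: raising these to the $2^{\,m-i_+}$-th and $2^{\,m-i_-}$-th powers with $m=\max\{i_+,i_-\}$ and adding exhibits $(w_++w_-)^{2^m}$ as $F$-rational, whence $j_1\le m$.

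I would prove the rationality of $w_+^{2^{i_+}}$, the case of $w_-^{2^{i_-}}$ being symmetric, by working on $X':=X\times\SB(C_+)$ and descending afterwards. Writing $h_+$ for the hyperplane class on $\overline{\SB(C_+)}$, one has $\alpha_{X'}(\omega_+\times 1-1\times h_+)=[C_+]-[C_+]=0$, so by the exact sequence (\ref{Br}) applied to $X'$ the cycle $u:=w_+\times 1+1\times h_+\in\Ch^1(\overline{X'})$ is $F$-rational. Since $C_+$ has exponent $2$, its index is exactly $2^{i_+}$, and the geometric input I need is that $h_+^{2^{i_+}}$ is $F$-rational on $\overline{\SB(C_+)}$ (this follows from the known structure of the motive of a Severi--Brauer variety of a $p$-primary algebra, equivalently from $J_2(\PGL_1 C_+)=(i_+)$ combined with Proposition \ref{Poincare-rat}). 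Then $1\times h_+^{2^{i_+}}$ is $F$-rational on $\overline{X'}$ by flat pull-back, and by Frobenius $u^{2^{i_+}}=w_+^{2^{i_+}}\times 1+1\times h_+^{2^{i_+}}$, so $w_+^{2^{i_+}}\times 1$ is $F$-rational on $\overline{X'}$. Finally $G$ splits over $F(X)$, hence $C_+$ splits there and $\SB(C_+)$ acquires a rational point over $F(X)$; Lemma \ref{ax1} with $\mathfrak X=X$ and $\mathcal Y=\SB(C_+)$ then converts the rationality of $w_+^{2^{i_+}}\times 1$ into the rationality of $w_+^{2^{i_+}}$ on $\overline X$.

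The step I expect to be the real content is the rationality of $h_+^{2^{i_+}}$ on $\overline{\SB(C_+)}$; everything else is formal bookkeeping with the Tits--Picard sequence (\ref{Br}), the mod-$2$ Frobenius, and Lemma \ref{ax1}. Two minor points also need attention: the normalization ensuring $\alpha_{\SB(C_+)}(h_+)=[C_+]$ rather than a proper multiple of it, and the degenerate case $i_+=n-1$, in which $C_+$ is division of full degree $2^{n-1}$ and $h_+^{2^{i_+}}=0$, so that the required rationality is automatic.
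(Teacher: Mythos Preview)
Your argument is correct and rests on the same core fact as the paper's proof: the $F$-rationality of $w_\pm^{2^{i_\pm}}$ on $\overline{X}$, combined with the mod-$2$ Frobenius identity. The paper, however, packages this more tersely. It works directly with $w_1$ rather than passing through $j_{G,A}$: since $[A]+[C_+]+[C_-]=0$, the cycle $w_1+w_++w_-$ is $F$-rational by the sequence~(\ref{Br}); raising to the $2^i$-th power with $i=\max\{i_+,i_-\}$ and subtracting the $F$-rational $w_\pm^{2^i}$ leaves $w_1^{2^i}$ rational, and then the very definition of $j_1$ (via $e_1=\pi(w_1)$) gives $j_1\le i$ with no appeal to Proposition~\ref{j_1=jGA}. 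For the rationality of $w_\pm^{2^i}$ the paper simply cites \cite[Proposition~4.2]{PS1} as a black box, whereas you essentially reprove that statement inline by passing to $X\times\SB(C_\pm)$, using the known rational cycles on a Severi--Brauer variety of $2$-primary index, and descending via Lemma~\ref{ax1}. Your route is self-contained within the paper's toolkit; the paper's is shorter but relies on an external reference and avoids the detour through $j_{G,A}$.
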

\begin{proof}
Let $X$ be a variety of Borel subgroups in $\PGO^+(A,\sigma)$. Let $i=\max\{i_+, i_-\} $. Since $[A]+[C_+]+[C_-]=0 \in \Br(F)$, the cycle $w_1 + w_+ +w_- \in \Ch^1(\overline{X})$ is $F$-rational. We have 
$$(w_1 + w_+ +w_-)^{2^i}= w_1^{2^i} + w_+^{2^i} +w_-^{2^i} \, .$$
By \cite[Proposition 4.2]{PS1} the cycles $w_+^{2^i}$ and $w_-^{2^i}$ are $F$-rational and, hence, $w_1^{2^i}$ is also $F$-rational. It follows from the very definition of $j_1$ that $j_1 \leq i$.
\end{proof}

Combining upper bounds  (\ref{upperbound1}), (\ref{upperbound2}) and (\ref{upperbound3}) we get
\begin{equation}
\label{upperbound}
    j_1\leq \min\{k_1, i_A, \max\{i_+, i_-\}\} \, .
\end{equation}
It appears that the above upper bound is the exact value of $j_1$.

\begin{thm}
\label{j1}
The following formula holds for the first component $j_1$ of $J(A, \sigma)$
\begin{equation}
\label{j1_formula}
 j_1= \min\{k_1, i_A, \max\{i_+, i_-\}\} \, .
\end{equation}

\end{thm}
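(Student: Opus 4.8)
The plan is to prove the matching lower bound $j_1 \geq \min\{k_1, i_A, \max\{i_+, i_-\}\}$, since the upper bound is already established in \eqref{upperbound}. Set $j := \min\{k_1, i_A, \max\{i_+,i_-\}\}$; I want to show that no $F$-rational cycle of the form $w_1^{2^{j_1}} + \delta$ with $\delta \in \Ker\pi$ can exist for $j_1 < j$. The natural approach is to pass to a generic situation where all the relevant cycles become as non-rational as possible, then detect rationality via the index reduction formula applied to the Brauer classes attached to cycles on a suitable projective homogeneous variety $X$ (the variety of Borel subgroups, or better the smaller variety $Y$ from Proposition \ref{j_1=jGA}). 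Using Proposition \ref{j_1=jGA} we already know $j_1 = j_{G,A}$, so it suffices to prove $j_{G,A} \geq j$, i.e.\ $w^{2^{j_1}}$ (with $w = w_+ + w_-$ as in that proof, or $w_1$) is not $F$-rational when $2^{j_1} < 2^j$.

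First I would reduce to the case where the indices $i_A, i_+, i_-$ equal $k_1 = v_2(n)$ by choosing the ``most generic'' algebra with involution realizing the given exponents/discriminant, via a versal torsor argument; this does not decrease $j_1$ because $J$-invariant behaves monotonically under specialization, so it is enough to produce one example over one field where $j_1$ attains the conjectural value. Concretely one takes $(A,\sigma)$ generic among degree-$2n$ orthogonal involutions with trivial discriminant and $C_\pm$ of prescribed index; for such a group the cocycle is generic, so by Remark \ref{generic} the sequence \eqref{J-sequence} is exact and $\overline{\Ch}^*(G_0)$ is computed purely from the $J$-invariant and the Tits algebras. The key computation is then: for $2^m < 2^j$, show $w_1^{2^m}$ (equivalently $w^{2^m}$) does not lie in $\overline{\Ch}(X)$. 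One does this by the same index-reduction argument sketched in the commented-out portion of the proof of Theorem \ref{main}: if $w^{2^m}$ were $F$-rational, then combining with $w_1 + w_+ + w_- \in \overline{\Ch}^1(X)$ and the short exact sequence \eqref{Br} for $X \times \SB(A)$, one extracts an $F$-rational cycle on $\overline{X \times \SB(A)}$ and applies Proposition \ref{rat-cycles} / the structure of the set $J$ in \eqref{1xh} to force $1 \times h^{2^m}$ to be rational on $\SB(A)$, which by the index reduction formula forces $2^m \geq i_A$ — contradiction when $m < i_A$. Similarly, pairing against $\SB(C_+)$ and $\SB(C_-)$ and using \cite[Proposition 4.2]{PS1} to control which powers of $w_\pm$ are rational forces $2^m \geq \max\{i_+, i_-\}$, while the structure of the generic $J$-invariant ($\overline{\Ch}^1$ contains no nonzero cycle below codimension $2^{k_1}$, as exploited in Proposition \ref{j_1=jGA}) forces $2^m \geq 2^{k_1}$. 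Putting the three together gives $2^m \geq 2^j$, hence $j_1 \geq j$.

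The main obstacle will be the bookkeeping in the index-reduction step: I need to know precisely the Brauer class $\alpha_X(\tilde{\delta})$ of an arbitrary correction term $\delta \in \Ker\pi$ of codimension $1$ — a priori $\delta$ could be a combination of $w_+, w_-$ and cycles pulled back from other Tits algebras, and I must show none of these can ``absorb'' the non-rationality of $w_1^{2^m}$. The argument in Proposition \ref{j_1=jGA} already handles the reduction to $Y$, on which $\Ker\pi$ in codimension $1$ is tightly constrained, so the real work is to check that on $Y$ (or on $X \times \SB(A) \times \SB(C_+) \times \SB(C_-)$) the only Brauer classes in play are the expected $\Z/2$-combinations of $[A], [C_+], [C_-]$, and then to run the index reduction formula for $\SB(A)$, $\SB(C_+)$, $\SB(C_-)$ over the generic point of $X$ to pin down the minimal rational power. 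I expect the quadratic-form translation (over $F_A$ the group becomes $\PGO^+(q_\sigma)$, and Vishik's computations for quadrics bound the rational powers of $w_\pm$) to be the cleanest way to control the $i_+, i_-$ contributions; assembling these bounds with the generic $J$-invariant bound $2^{k_1}$ is then a short exercise in comparing the three.
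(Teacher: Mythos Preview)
Your proposal has a genuine gap, and it is worth seeing exactly where the logic breaks.

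First, the versal torsor reduction does not help. Passing to a generic cocycle can only \emph{increase} $j_1$, but it also increases the indices $i_A,i_+,i_-$; so proving the formula at the generic point says nothing about the inequality $j_1 \geq \min\{k_1,i_A,\max\{i_+,i_-\}\}$ for a \emph{specific} $(A,\sigma)$ with smaller indices. In particular, ``it is enough to produce one example'' is simply false: the right-hand side of the formula varies with $(A,\sigma)$.

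Second, and more seriously, the chain of implications you outline would prove too much. You claim that if $w^{2^m}$ is $F$-rational then $2^m \geq i_A$, $2^m \geq \max\{i_+,i_-\}$, and $2^m \geq k_1$. If this were so, it would give $j_1 \geq \max\{k_1,i_A,i_+,i_-\}$, contradicting the upper bound whenever these numbers differ. The precise error is in the step ``force $1\times h^{2^m}$ to be rational on $\SB(A)$, which forces $2^m \geq i_A$'': from $F$-rationality of $1\times h^{2^m}$ on $\overline{X\times\SB(A)}$ you only get $F(X)$-rationality of $h^{2^m}$ on $\overline{\SB(A)}$, hence $2^m \geq \ind A_{F(X)}$, not $2^m \geq \ind A$. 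Lemma~\ref{ax1} goes the other way only when $X$ has a zero-cycle of degree one over $F(\SB(A))$, which would mean $G_{F_A}$ is split --- almost never the case. The same objection applies to your $i_\pm$ and $k_1$ bounds.

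The paper's argument avoids this by choosing the auxiliary variety carefully. Instead of the Borel variety, it takes the spinor varieties $X^+$ and $X^-$ (the two components of the variety of maximal totally isotropic ideals). For the pair $(X^+\times\SB(A),\,1\times h)$, Proposition~\ref{j_1=jGA} still gives $j_1 = j_{G,A}$, and restricting $h^{2^{j_1}}$ to $\SB(A)_{F(X^+)}$ yields $2^{j_1} \geq \ind A_{F(X^+)}$. The point is that the index reduction formula for this particular variety gives $\ind A_{F(X^+)} = \min\{2^{k_1},2^{i_A},2^{i_-}\}$ directly --- the three constraints are \emph{already combined into a minimum} by the index reduction formula, rather than obtained separately. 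Repeating with $X^-$ gives the same with $i_+$ in place of $i_-$, and the two together yield the desired lower bound. You should replace the attempt to extract three independent inequalities by this single application of index reduction over $F(X^\pm)$.
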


\begin{proof}
Since inequality (\ref{upperbound}) holds, it remains to show that
$$j_1 \geq \min\{k_1, i_A, \max\{i_+, i_-\}\} \, .$$

Let $X$ be the variety of totally isotropic ideals in $(A, \sigma)$ of reduced dimension $n$ (the variety $X$ is the twisted form of the variety of maximal totally isotropic subspaces of a quadratic form). Since the discriminant of $\sigma$ is trivial, the variety $X$ has two connected components $X^+$ and $X^-$ corresponding to the components $C_+$ and $C_-$ of the Clifford algebra $C(A, \sigma)$. The varieties $X^+$ and $X^-$ are projective homogeneous under the action of the group $\PGO^+(A,\sigma)$.

Recall that $\overline{\SB(A)} \simeq \mathbb{P}^{2n -1} $
and denote by $h$ the hyperplane class in $\Ch^1(\overline{\SB(A)})$. Note that the variety $X= X^+ \times \SB(A)$ and the cycle $1\times h \in \Ch^1(\overline{X^+ \times \SB(A)})$ satisfy the conditions in Definition \ref{jGA} for the group $G=\PGO^+(A,\sigma)$ and central simple algebra $A$. 

Therefore, by Proposition \ref{j_1=jGA}, $j_1$ is equal to the smallest integer $j$, such that $1 \times h ^{2^j} \in \Ch(\overline{X^+ \times \SB(A)})$ is $F$-rational. If $j$ is such a smallest integer, then $h ^{2^j} \in \Ch(\overline{\SB(A)})$ is $F(X^+)$-rational. It follows that $2^j \geq \ind A_{F(X^+)}$. On the other hand, by Index Reduction Formula \cite[page 594]{MPW} we have $\ind A_{F(X^+)} = \min\{2^{k_1}, 2^{i_A}, 2^{i_-} \}$. Hence, the following inequality holds
\begin{equation}
\label{i-}
j_1\geq \min\{{k_1}, {i_A}, {i_-} \} \, .
\end{equation}
Repeating the same arguments but now for the variety $X^- \times \SB(A)$ we get another inequality 
\begin{equation}
\label{i+}
j_1 \geq \min\{{k_1}, {i_A}, {i_+} \} \, .
\end{equation}

It follows from inequalities (\ref{i-}) and (\ref{i+}) that 
$j_1 \geq \min\{k_1, i_A, \max\{i_+, i_-\}\} $ and we conclude that formula (\ref{j1_formula}) holds.
\end{proof}

\begin{cor}
\label{half-spin}
Assume that $(A,\sigma)$ is half-spin, that is one of the components of the Clifford algebra $C(A,\sigma)$ is split.  Then $j_1= \min\{k_1, i_A \}$.
\end{cor}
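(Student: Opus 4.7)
The plan is to derive the corollary as a direct simplification of the formula in Theorem \ref{j1} via the fundamental relation among the Brauer classes $[A]$, $[C_+]$, $[C_-]$. The only thing to verify is that under the half-spin hypothesis the contribution $\max\{i_+, i_-\}$ in the formula becomes redundant, i.e. $\max\{i_+, i_-\} \geq i_A$.

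By symmetry of the two components, one may assume that $C_-$ is split, so $i_- = 0$ and $\max\{i_+, i_-\} = i_+$. The essential step is to invoke the fundamental relation $[A] + [C_+] + [C_-] = 0$ in $\Br(F)$ (valid for even $n$, as was already used in the proof of Proposition \ref{j_1=jGA}). Combined with $[C_-] = 0$ and the fact that $[A]$ has exponent $2$, this gives $[A] = [C_+]$ in $\Br(F)$. Hence $A$ and $C_+$ share the same underlying division algebra and in particular have the same index, so $i_+ = i_A$.

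Substituting $i_+ = i_A$ into the formula of Theorem \ref{j1} yields
\[
j_1 \;=\; \min\{k_1,\, i_A,\, \max\{i_+, i_-\}\} \;=\; \min\{k_1,\, i_A,\, i_A\} \;=\; \min\{k_1,\, i_A\},
\]
as claimed. This covers the situation when $n$ is even; the odd-$n$ case (should one wish to state the corollary without the standing evenness assumption of the section) is immediate from $k_1 = v_2(n) = 0$, which forces both sides of the claimed equality to vanish.

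No substantial obstacle arises in this argument: once Theorem \ref{j1} is in hand, the corollary is essentially a one-line manipulation of Brauer classes. The only mildly delicate point is the use of the fundamental relation, which requires $n$ to be even and $(A,\sigma)$ to have trivial discriminant, both of which are part of the running hypotheses of the section.
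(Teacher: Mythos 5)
Your proof is correct and matches the intended derivation: the paper states this corollary as an immediate consequence of Theorem \ref{j1}, and your use of the fundamental relation $[A]+[C_+]+[C_-]=0$ together with exponent $2$ to get $i_+=i_A$ (when $C_-$ is split) is exactly the simplification needed.
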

\begin{proof}
Since $[A]+[C_+]+[C_-]=0 \in \Br(F)$ and one of the components $C_+$ or $C_-$ is split, we have 
$\{i_+, i_-\} = \{i_A, 0 \} $. Then the formula $j_1 = \min\{k_1, i_A \}$ follows from Theorem \ref{j1}.
\end{proof}

The $J$-invaraint of $(A, \sigma)$ contains two components $j_1$ and $j_2$ of degree one. The formula for $j_1$ is computed in Theorem \ref{j1}.  By \cite[Corollary 5.2]{QSZ} we have  $j_2 \leq \min\{i_+, i_-\} $.
It appears that this upper bound is the exact value of $j_2$ in the following case.

\begin{prop}
\label{j2}
If $\min\{i_+, i_-\} < \min\{k_1, i_A\}$, then $j_2= \min\{i_+, i_-\} $.
\end{prop}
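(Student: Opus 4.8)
The plan is to combine the already-established upper bound $j_2 \le \min\{i_+, i_-\}$ with a matching lower bound obtained by splitting the component of the Clifford algebra of larger index and applying Theorem \ref{main} together with the formula for $j_1$ from Theorem \ref{j1}. Without loss of generality assume $i_- \le i_+$, so that $\min\{i_+,i_-\} = i_-$, and write $B = C_-$ for the corresponding component of the Clifford algebra; by hypothesis $i_- < \min\{k_1, i_A\}$. Since $B$ is a Tits algebra of $G = \PGO^+(A,\sigma)$ (namely $A_{\omega_-}$), I would apply Theorem \ref{main} with the Tits algebra $B$ in place of $A$. Passing to the generic splitting field $F_B$ of $B$, the components of $J(A,\sigma)$ of degree $>1$ are unchanged, and the two degree-one components $(j_1,j_2)$ together with $0$ equal the two degree-one components $(j_1',j_2')$ over $F_B$ together with $j_{G,B}$, as multisets.

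The key computation is to identify $j_{G,B}$, the generic-splitting index for $B$. Here I would argue exactly as in the proof of Theorem \ref{j1}: taking $X^+$ to be one of the two components of the variety of maximal totally isotropic ideals (the one associated to $C_+$), one has $X^+ \times \SB(B)$ generically split and the cycle $1 \times h$ satisfying the conditions of Definition \ref{jGA} for the pair $(G, B)$, so $j_{G,B}$ equals the smallest $j$ with $1 \times h^{2^j}$ rational, which forces $2^{j_{G,B}} = \ind B_{F(X^+)}$. By the Index Reduction Formula \cite[page 594]{MPW}, $\ind B_{F(X^+)} = \min\{2^{k_1}, 2^{i_B}, 2^{i_A}\}$ where $i_B = i_-$ (one uses that over $F(X^+)$ the algebra $C_+$ becomes split while $A$ and $C_-$ behave as indicated); hence $j_{G,B} = \min\{k_1, i_-, i_A\} = i_-$ by our hypothesis $i_- < \min\{k_1,i_A\}$. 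Now over $F_B$ the algebra $C_-$ is split, so by Corollary \ref{half-spin} (applied to $(A,\sigma)_{F_B}$, which is half-spin) we get $j_1' = \min\{k_1, i_A\}$, which is strictly greater than $i_-$; in particular $j_1' \ne j_{G,B}$. Feeding $j_{G,B} = i_-$ into the multiset equality $\{j_1, j_2, 0\} = \{j_1', j_2', i_-\}$, and using $j_1' = \min\{k_1,i_A\} > i_-$ together with the general bound $j_2' \le \min\{i'_+, i'_-\}$ which over $F_B$ has $i'_- = 0$ hence $j_2' = 0$, one of $j_1, j_2$ must equal $i_-$; since $j_1 = \min\{k_1, i_A, \max\{i_+,i_-\}\} \ge \min\{k_1,i_A\} > i_-$ by Theorem \ref{j1} (as $\max\{i_+,i_-\} = i_+ \ge i_- $ and in fact the minimum is at least $\min\{k_1,i_A\}$ only if $i_+ \ge \min\{k_1,i_A\}$ — this needs care, see below), we conclude $j_2 = i_- = \min\{i_+, i_-\}$.

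The main obstacle I anticipate is the bookkeeping in the last step: one must rule out the possibility that it is $j_1$, rather than $j_2$, that absorbs the value $i_-$ in the multiset equality. This requires knowing that $j_1 > i_-$, i.e. that $\min\{k_1, i_A, \max\{i_+,i_-\}\} > i_-$; since $\max\{i_+,i_-\} = i_+ \ge i_-$, this reduces to showing $i_+ > i_-$ is not needed and instead that $\min\{k_1,i_A\} > i_-$, which is precisely the hypothesis. One also has to justify that passing to an extension of degree coprime to $2$ (to reduce $A$, $C_+$, $C_-$ to $2$-primary algebras of $2$-power degree, as in the proof of Theorem \ref{main}) does not disturb the indices $i_A, i_+, i_-$ or the invariant $J(A,\sigma)$ — this is standard but should be stated. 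Finally, one must check carefully that the Index Reduction computation over $F(X^+)$ gives $\min\{2^{k_1},2^{i_-},2^{i_A}\}$ and not some other combination; this is the content of applying \cite[page 594]{MPW} to the homogeneous variety $X^+$, exactly as was done in Theorem \ref{j1}, and it is here that the asymmetry between $C_+$ and $C_-$ (which one splits generically) is used.
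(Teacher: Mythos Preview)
Your strategy — apply Theorem~\ref{main} with a component of the Clifford algebra as the Tits algebra — is exactly the paper's approach, though you split $C_-$ (the component of \emph{smaller} index) while the paper splits the other one. The genuine gap is in your computation of $j_1'$: over $F_B = F_{C_-}$ the index of $A$ may drop, since by index reduction for Severi--Brauer varieties $\ind A_{F_{C_-}} = \min\{\ind A, \ind(A\otimes C_-)\} = \min\{2^{i_A}, 2^{i_+}\}$; hence Corollary~\ref{half-spin} gives $j_1' = \min\{k_1, i_A, i_+\}$, not $\min\{k_1, i_A\}$. This matters precisely in the case $i_+ = i_-$: then $j_1' = i_- = j_1$ (by Theorem~\ref{j1}), so your inequality ``$j_1 > i_-$'' is false and the argument that ``$i_-$ must land in $j_2$'' breaks down. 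You flag this step as needing care, but the resolution you offer (``$\min\{k_1,i_A\} > i_-$ is the hypothesis'') does not address it, since the obstruction is the third term $i_+$.

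The paper handles exactly this case $i_+ = i_-$ separately: from the multiset equality one gets $j_2 = j_{G,C_*}$, and then the Index Reduction Formula gives $j_{G,C_*} \ge i_+$, which combined with the upper bound $j_2 \le \min\{i_+,i_-\}$ forces equality. Your route can be repaired in the same spirit: with the corrected value $j_1' = \min\{k_1,i_A,i_+\}$ one has $j_1' = j_1$ in \emph{all} cases, so the multiset identity $\{j_1,j_2,0\} = \{j_1',0,j_{G,C_-}\}$ yields $j_2 = j_{G,C_-}$ directly, and it remains to show $j_{G,C_-} = i_-$. For this you need both inequalities; your $X^+ \times \SB(C_-)$ argument only gives $j_{G,C_-} \ge i_-$ (not the equality you assert), but the missing bound $j_{G,C_-} \le i_-$ follows at once from $w_-^{2^{i_-}}$ being $F$-rational, as in Lemma~\ref{i+i-} via \cite[Proposition~4.2]{PS1}.
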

\begin{proof}
Denote by $j_i^+$ (resp. $j_i^-$) the $i$-th component of the $J$-invariant of $(A, \sigma)$ over the function field of $\SB(C_+)$ (resp. of $\SB(C_-)$). Note that over such a function field the Clifford invariant of $(A, \sigma)$ is trivial.
By Corollary \ref{half-spin} and using Index Reduction Formula \cite[page 592]{MPW} we have
$$j_1^+ = \min\{k_1, v_2(\ind A_{F(\SB(C_+))}) \} = \min\{k_1, i_A, i_-\} \, . $$
Similarly we get $j_1^- = \min\{k_1, i_A, i_+\} $. 

Recall that at least one of the numbers $i_+$ or $i_-$ is less than $\min\{k_1, i_A\}$. Hence, $j_1^+=i_-$ or $j_1^-=i_+$. Let $\varepsilon$ be a symbol $+$ or $-$, such that $j_1^{\varepsilon}=\min\{i_+, i_-\}$. Observe that $j_2^{\varepsilon}=0$. Therefore, by Theorem \ref{main} we have $j_1^{\varepsilon} \in \{j_1,j_2\}$. If $i_+ \neq i_-$, then by Theorem \ref{j1} we have
$$ j_1 = \min\{k_1, i_A, \max\{i_+, i_-\}\} > \min\{i_+, i_-\} = j^{\varepsilon}_1 $$
\noindent and, hence,  $j_2 =  j^{\varepsilon}_1$.
Assume now $i_+ = i_-$, then $j_1 = j^{\varepsilon}_1 $.
It follows from Theorem \ref{main} that $j_2 =  j_{G, C_{\varepsilon}}  $. 
Considering the variety $X^{-\varepsilon} \times \SB(C_{\varepsilon})$ and applying Index Reduction Formula in the same way as in the proof of Theorem \ref{j1} one can check that $j_{G, C_{\varepsilon}} \geq i_+ = i_-$. 
Therefore, in this case we also have $j_2 = \min\{i_+, i_-\}$.
\end{proof}

\bigskip
{\sc Acknowledgments.} 
I am grateful to Anne Qu\'eguiner-Mathieu for introducing me Conjecture \ref{conj} and for usefull discussions. I would like to thank Nikita Semenov for numerous usefull discussions on the subject of this paper.




\end{document}